\DeclareMathOperator{\lc}{H}
\newcommand{\hght}{\operatorname{ht}}
\newcommand{\Spec}{\operatorname{Spec}}
\newcommand{\length}{\ell}
\newcommand{\eh}{\operatorname{e}}
\newcommand{\mf}{\mathfrak}
\DeclareMathOperator{\m}{\mathfrak{m}}
\DeclareMathOperator{\initial}{in}
\DeclareMathOperator{\red}{red}
\newcommand{\gr}{\operatorname{gr}}
\newtheorem{theorem}{Theorem}
\newtheorem{lemma}[theorem]{Lemma}
\newtheorem{corollary}[theorem]{Corollary}
\newtheorem{conjecture}[theorem]{Conjecture}
\newtheorem*{statement*}{Statement}
\newtheorem*{theorem*}{Theorem}
\newtheorem*{lemma*}{Lemma}
\newtheorem*{fact*}{Fact}
\theoremstyle{definition}
\newtheorem{definition}[theorem]{Definition}
\newtheorem*{definition*}{Definition}
\newtheorem*{example*}{Example}
\newtheorem{remark}[theorem]{Remark}
\begin{document}

\title{Uniform Lech's inequality}

\author{Linquan Ma}
\address{Department of Mathematics, Purdue University, West Lafayette, IN 47907 USA}
\email{ma326@purdue.edu}

\author{Ilya Smirnov}
\address{BCAM -- Basque Center for Applied Mathematics, Bilbao, Spain \quad and \quad IKERBASQUE, Basque Foundation for Science, Bilbao, Spain}
\email{ismirnov@bcamath.org}

\maketitle

\begin{abstract}
Let $(R,\m)$ be a Noetherian local ring of dimension $d\geq 2$. We prove that if $\eh(\widehat{R}_{\red})>1$, then the classical Lech's inequality can be improved uniformly for all $\m$-primary ideals, that is, there exists $\varepsilon>0$ such that $\eh(I)\leq d!(\eh(R)-\varepsilon)\length(R/I)$ for all $\m$-primary ideals $I\subseteq R$. This answers a question raised in \cite{HMQS}. We also obtain partial results towards improvements of Lech's inequality when we fix the number of generators of $I$.
\end{abstract}

\section{Introduction}
The origin of this paper is a simple inequality of Lech, proved in \cite{LechMultiplicity}, that connects the colength and the multiplicity of an $\m$-primary ideal in a Noetherian local ring $(R,\m)$.

\begin{theorem}[Lech's inequality]
\label{theorem: Lech}
Let $(R,\m)$ be a Noetherian local ring of dimension $d$ and let $I$ be an $\m$-primary ideal of $R$. Then we have $$\eh(I)\leq d!\eh(R)\length(R/I),$$ where $\eh(I)$ denotes the Hilbert--Samuel multiplicity of $I$ and $\eh(R):=\eh(\m)$.
\end{theorem}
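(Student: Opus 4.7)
This is Lech's classical theorem and I would reproduce his original argument. The first step is the standard faithfully flat base change $R\to R[t]_{\m R[t]}$, which preserves both $\eh(\cdot)$ and $\length(R/\cdot)$, so we may assume that the residue field $R/\m$ is infinite. Under this assumption, the $\m$-primary ideal $I$ admits a minimal reduction $J=(a_1,\ldots,a_d)$ generated by a system of parameters, with $\eh(J)=\eh(I)$.

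The main tool is then Lech's limit formula for parameter ideals,
\[
\eh(a_1,\ldots,a_d)=\lim_{n\to\infty}\frac{\length\bigl(R/(a_1^n,\ldots,a_d^n)\bigr)}{n^d},
\]
which transfers the estimation of $\eh(I)$ to a pure length computation. The heart of the argument is a length inequality of the form
\[
\length\bigl(R/(a_1^n,\ldots,a_d^n)\bigr) \leq d!\,\eh(R)\,\length(R/I)\,n^d + O(n^{d-1}),
\]
which I would obtain by filtering the quotient by the $n^d$ cyclic submodules generated by the monomials $a_1^{i_1}\cdots a_d^{i_d}$ with $0\leq i_j<n$. Each such cyclic piece can be controlled by $\length(R/I)$ (using that the $a_j$ lie in $I$), and the precise constant $d!\,\eh(R)$ arises by comparing to the Hilbert--Samuel function $\length(R/\m^n)\sim \eh(R)n^d/d!$ via the containment $\m^{\length(R/I)}\subseteq I$. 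Dividing by $n^d$ and letting $n\to\infty$ then yields $\eh(I)=\eh(J)\leq d!\,\eh(R)\,\length(R/I)$.

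The main obstacle is executing the filtration so that the dependence on $\length(R/I)$ is exactly linear with the precise constant $d!\,\eh(R)$, rather than polynomial. The naive approach, which uses only $\m^{\length(R/I)}\subseteq I$, the inclusion $I^n\supseteq \m^{n\length(R/I)}$, and monotonicity of multiplicity, yields only the weaker bound $\eh(I)\leq \length(R/I)^d\,\eh(R)$. Closing the gap from this exponential-in-$\length(R/I)$ estimate to Lech's linear bound is exactly the content of the theorem and is where the combinatorial factor $d!$ enters, reflecting the $d$-dimensional simplex of monomial exponents underlying the filtration.
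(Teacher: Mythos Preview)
The paper does not prove this statement: Theorem~\ref{theorem: Lech} is quoted in the introduction as Lech's classical result, with a citation to \cite{LechMultiplicity} and no argument supplied. There is therefore no in-paper proof to compare your proposal against.

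On the merits of your sketch: the opening reductions (infinite residue field, a minimal reduction $(a_1,\ldots,a_d)$ of $I$, Lech's limit formula for parameter ideals) are correct and standard. But, as you yourself flag, the entire content of the theorem lies in the step you call ``the main obstacle'': upgrading the naive estimate $\eh(I)\le \ell(R/I)^d\,\eh(R)$ to the linear bound $d!\,\eh(R)\,\ell(R/I)$, and your proposal does not actually carry this out. The monomial filtration you describe bounds $\ell\bigl(R/(a_1^n,\ldots,a_d^n)\bigr)$ by $n^d$ times the length of a typical cyclic subquotient, and the natural bound on such a piece is $\ell\bigl(R/(a_1,\ldots,a_d)\bigr)$, not $\ell(R/I)$; the fact that $a_j\in I$ does not by itself improve this. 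Nor does combining $\m^{\ell(R/I)}\subseteq I$ with the asymptotic $\ell(R/\m^n)\sim \eh(R)n^d/d!$ close the gap --- that combination is exactly what produces the weak $\ell(R/I)^d$ bound you already noted. Lech's argument in \cite{LechMultiplicity} is organized differently (an inductive reduction in the dimension rather than a single monomial count), and the appearance of $d!$ there is not via the mechanism you suggest. So while your outline names the right framework, the decisive step remains a genuine gap in the proposal rather than a routine detail to be filled in.
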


Lech observed that his inequality is never sharp if $d \geq 2$ (see \cite[Page 74, after (4.1)]{LechMultiplicity}): that is, when $d\geq 2$ we always have a strict inequality in Theorem~\ref{theorem: Lech}. The problem of improving Lech's inequality by replacing $\eh(R)$ with a smaller constant was raised in \cite{HMQS}. This problem is partially motivated by \cite{Mumford}, where Mumford considered the quantity
$$\sup_{\sqrt{I} = \m} \left \{\frac{\eh(I)}{d!\length(R/I)} \right \}$$
and showed that this has close connections with singularities on the compactification of the moduli spaces of smooth varieties constructed via Geometric Invariant Theory. The following conjecture is the proposed refinement of Lech's inequality (see \cite[Conjecture 1.2]{HMQS}):

\begin{conjecture}
\label{conj: asymptoticLech}
Let $(R,\m)$ be a Noetherian local ring of dimension $d\geq 1$.
\begin{enumerate}
\item[(a)] If $\widehat{R}$ has an isolated singularity, i.e.,  $\widehat{R}_P$ is regular for all $P\in\Spec\widehat{R}-\{\m\}$, then
\[
\lim_{N\to\infty} \sup_{\substack{\sqrt{I}=\m \\ \length(R/I)> N}} \left\{\frac{\eh(I)}{d!\length(R/I)} \right\}=1.
\]
\item[(b)] We have $\eh(\widehat{R}_{\red}) > 1$ if and only if
\[
\lim_{N\to\infty} \sup_{\substack{\sqrt{I}=\m \\ \length(R/I)> N}} \left\{\frac{\eh(I)}{d!\length(R/I)} \right\}<\eh(R).
\]
\end{enumerate}
\end{conjecture}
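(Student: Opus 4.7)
The plan is to establish the conjecture, focusing principally on the ``only if'' direction of part (b)---the existence of a uniform $\varepsilon>0$ improving Lech's inequality assuming $\eh(\widehat{R}_{\mathrm{red}})>1$---since this is the main theorem announced in the abstract. Standard reductions (completion, faithfully flat extension to an infinite residue field, and a splitting argument along minimal and embedded primes) allow me to assume $R$ is complete and reduced, with $\eh(R)=\eh(R_{\mathrm{red}})>1$.

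My strategy is to split $\m$-primary ideals by the $\m$-adic order $\ord(I)=\sup\{n:I\subseteq\m^n\}$, fix a threshold $s_0$ depending only on $R$, and treat two regimes separately. In the \emph{bounded-order regime} $\ord(I)\leq s_0$, the ideal $I$ contains $\m^{s_0}$, hence corresponds to an ideal of the Artinian quotient $R/\m^{s_0}$; such ideals form a finite disjoint union of projective parameter schemes, on which $\eh(I)$ is upper semicontinuous and $\length(R/I)$ is locally constant. Lech's pointwise strict inequality for $d\geq 2$ combined with compactness then yields a uniform gap $\varepsilon_2>0$. In the \emph{large-order regime} $\ord(I)>s_0$, I would aim to show that $\eh(I)/\length(R/I)$ approaches $d!$ as $\ord(I)\to\infty$ uniformly in $I$; since $\eh(R)>1$, this produces a uniform gap $\varepsilon_1>0$ separating the ratio from $d!\eh(R)$. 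Setting $\varepsilon=\min(\varepsilon_1,\varepsilon_2)$ completes the proof.

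The principal obstacle is the large-order regime: a uniform estimate of the form $\eh(I)\leq d!\length(R/I)(1+o(1))$ as $\ord(I)\to\infty$ with the $o(1)$ depending only on $R$. This holds cleanly for $I=\m^s$ by direct computation with the Hilbert--Samuel polynomial, but for a general $I\subseteq \m^s$ only Lech's weaker $d!\eh(R)\length(R/I)$ is immediate. My proposal is to leverage Rees's theorem, which gives $\eh(I)\geq\ord(I)^d\eh(R)$, together with the structure of the Rees algebra of $\m$ and a careful comparison of the Hilbert series of $I$ and of $\m^{\ord(I)}$. The hypothesis $\eh(R)>1$, equivalent to $\gr_\m(R)$ failing to be a polynomial ring, should enter quantitatively through the sub-leading coefficients of the Hilbert function of $\gr_\m(R)$, and should be the source of the strictly positive correction.

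For part (a), the direction is reversed: construct $\m$-primary ideals of arbitrarily large colength whose ratio $\eh(I)/(d!\length(R/I))$ tends to $1$. Here the isolated singularity hypothesis lets one work on the regular punctured spectrum and pull back suitably ``generic'' ideals so that their associated graded modules asymptotically mimic those over a regular ring. For the converse direction of (b): if $\eh(\widehat{R}_{\mathrm{red}})=1$, then under equidimensionality $\widehat{R}_{\mathrm{red}}$ is regular by Nagata's theorem, and one constructs ideals built from elements of the nilradical of $\widehat{R}$ whose colength-to-multiplicity ratio realizes Lech's constant $\eh(R)$ in the asymptotic supremum, closing the equivalence.
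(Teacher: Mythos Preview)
Your dichotomy on $\ord(I)$ collapses at the first step: $\ord(I)\le s_0$ means only that $I\not\subseteq\m^{s_0+1}$, \emph{not} that $I\supseteq\m^{s_0}$. In $k[[x,y]]$ the ideal $I_n=(x,y^n)$ has $\ord(I_n)=1$ for every $n$, yet $\length(R/I_n)=n$ is unbounded and $I_n\not\supseteq\m^2$ once $n\ge 3$. Thus the ``bounded-order regime'' is not a bounded family of ideals in any fixed Artinian quotient, and no compactness/semicontinuity argument can produce the uniform gap $\varepsilon_2$ you claim.

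The large-order regime is a hope rather than an argument. Your target---that $\eh(I)/(d!\length(R/I))\to 1$ uniformly as $\ord(I)\to\infty$---is essentially the hard direction of part (a) restricted to high-order ideals (large order forces large colength), and part (a) is itself open outside positive characteristic with perfect residue field; you cannot invoke it here. Rees's inequality $\eh(I)\ge\ord(I)^d\eh(R)$ is a \emph{lower} bound on $\eh(I)$ and does nothing toward bounding the ratio from above, and the ``careful comparison of Hilbert series'' is not a proof.

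The paper proceeds by an entirely different mechanism. It reduces by induction on $d$ to $d=2$ via a general hyperplane section $z$, using a local Bertini lemma to preserve $(R_0)$. In dimension two it splits on $\length(R/(I,z))$, not on $\ord(I)$. When this colength is bounded by a fixed $C$, it passes to $\gr_z(R)\cong(R/zR)[T]$ and then to $\gr_{\m}(R/zR)[T]$; the resulting ring has equal characteristic and the initial ideal has a bounded number of generators (Lemma~\ref{lemma: bound generators}), so the Hanes-type inequality of \cite[Proposition 5.7]{HMQS} supplies the uniform gap. When $\length(R/(I,z))>C$, the one-dimensional estimate \cite[Proposition 4.10]{HMQS} gives $\eh(IS)\le\frac32\length(S/IS)$ for $S=R/zR$, and \cite[Lemma 5.1]{HMQS} compares $I$ with $I:z$ so that induction on colength closes the argument. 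Neither of your two regimes corresponds to these devices; in particular the passage to associated graded rings---the key step allowing the use of equal-characteristic tools in mixed characteristic---is absent from your proposal.
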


Roughly speaking, we expect that the constant $\eh(R)$ on the right hand side of Lech's inequality can usually be replaced by a smaller number as long as the colength of the ideal is large. The first part of Conjecture~\ref{conj: asymptoticLech}
was established in \cite{HMQS}
when $R$ has positive characteristic with perfect residue field and the second part of Conjecture~\ref{conj: asymptoticLech} when $R$ has equal characteristic. Our main goal in this article is to settle the second part of Conjecture~\ref{conj: asymptoticLech} in full generality by proving the following, which can be viewed as a uniform version of Lech's inequality:

\begin{theorem}[=Theorem~\ref{theorem: uniform Lech}]
\label{theorem: uniform Lech introduction}
Let $(R, \mf m)$ be a Noetherian local ring of dimension $d \geq 2$. Suppose $\eh({\widehat{R}}_{\red})>1$. Then there exists $\varepsilon > 0$ such that for any $\m$-primary ideal $I$, we have
$$
\eh(I) \leq d!(\eh(R) - \varepsilon) \length (R/I).
$$

\end{theorem}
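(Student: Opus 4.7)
The plan is to combine standard reductions with the asymptotic version of the conjecture and a simple finiteness argument for ideals of bounded colength. First pass to $\widehat{R}$: both sides of the inequality, and the hypothesis $\eh(\widehat{R}_{\red})>1$, are preserved under completion. Next reduce to the case that $R$ is reduced. Let $\mathfrak{N}$ denote the nilradical of $R$ (nilpotent since $R$ is complete), and use the associativity formula $\eh(I;R)=\sum_{P}\ell_{R_P}(R_P)\,\eh(I;R/P)$, summed over the minimal primes $P$ of $R$ with $\dim(R/P)=d$, together with a comparison of $\ell(R/I)$ with $\ell(R_{\red}/IR_{\red})$ obtained by filtering by the powers of $\mathfrak{N}$, to transfer the desired inequality from $R_{\red}$ to $R$. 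After these reductions, we may assume $R$ is complete and reduced with $\eh(R)>1$.

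\textbf{Splitting by colength.} Fix a threshold $N$ to be chosen. For ideals with $\ell(R/I)\geq N$, invoke the asymptotic statement from Conjecture~\ref{conj: asymptoticLech}(b), which is established in equal characteristic in \cite{HMQS}. Extending this asymptotic statement to mixed characteristic is expected to be the main new technical work; given it, we obtain a uniform constant $\alpha<\eh(R)$ such that $\eh(I)\leq d!\alpha\,\ell(R/I)$ for all such $I$, giving $\varepsilon_1=\eh(R)-\alpha>0$. For ideals with $\ell(R/I)<N$, use integrality: Lech's classical inequality bounds $\eh(I)\leq d!\eh(R)N$, so the pair $(\eh(I),\ell(R/I))$ takes only finitely many nonnegative integer values. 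Lech's observation that the inequality is strict when $d\geq 2$ then implies $d!\eh(R)\ell(R/I)-\eh(I)\geq 1$ for each realized pair, which yields $\eh(I)\leq d!(\eh(R)-1/(d!N))\ell(R/I)$. Hence $\varepsilon_2:=1/(d!N)$ works, and taking $\varepsilon=\min(\varepsilon_1,\varepsilon_2)$ concludes.

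\textbf{Main obstacle.} The principal difficulty will be extending the asymptotic version of Conjecture~\ref{conj: asymptoticLech}(b) to mixed characteristic: the argument in \cite{HMQS} relies on Frobenius and on reduction modulo $p$, neither of which is directly available in the general mixed-characteristic setting. A likely approach is to substitute perfectoid or prismatic techniques, or to invoke consequences of the now-established mixed-characteristic direct summand theorem, in order to build Frobenius-free versions of the key estimates. A secondary technical point lies in the reduction to the reduced case: one must carefully control how the multiplicative constants arising from the $\mathfrak{N}^i$-filtration interact with $\eh(R)$ versus $\eh(R_{\red})$ so as not to spoil the positivity of $\varepsilon$ at the end.
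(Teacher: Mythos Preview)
Your proposal correctly isolates the mixed-characteristic difficulty, but it does not actually propose a proof: you reduce everything to the asymptotic inequality of Conjecture~\ref{conj: asymptoticLech}(b) in mixed characteristic and then say only that ``perfectoid or prismatic techniques'' might substitute for Frobenius. That is precisely the content the paper has to supply, and the method it uses is entirely different from what you guess. No perfectoid or prismatic input appears. Instead, the paper passes to associated graded rings---first $\gr_{(z)}(R)$ for a general hyperplane section $z$, then $\gr_{(\m,T)}$ on the resulting $(S/zS)[T]$---so as to land in an equal-characteristic ring where the Hanes-type bound (Theorem~\ref{theorem: Hanes}) is already available. The splitting is not on $\ell(R/I)$ as you propose, but on $\ell(S/IS)$ for $S=R/zR$; the large-colength case is handled not by an asymptotic statement but by induction on $\ell(R/I)$ via \cite[Lemma~5.1]{HMQS}, which compares $I$ with $I:z$ and $IS$. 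The step up from dimension two to general $d$ is by induction on $d$ using a local Bertini lemma (Lemma~\ref{lemma: Ulrich}) to preserve $(R_0)$ after cutting by a general linear form.

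Your finiteness argument for ideals of bounded colength (Lech's strict inequality plus integrality of both sides, giving $\varepsilon = 1/(d!N)$) is correct and is essentially Lemma~\ref{lemma: Lech colength} in the paper, but it plays only a subsidiary role there, not one half of the main split. Your overall scheme ``asymptotic version $+$ bounded-colength finiteness $\Rightarrow$ uniform version'' is logically valid; however, since the paper deduces the asymptotic version \emph{from} the uniform one (Corollary~\ref{cor: asymptotic Lech}) rather than the reverse, invoking it here without an independent proof leaves the central step empty.
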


The main case of Conjecture~\ref{conj: asymptoticLech} (b) follows immediately from Theorem~\ref{theorem: uniform Lech introduction}, see Corollary~\ref{cor: asymptotic Lech}. Our approach to Theorem~\ref{theorem: uniform Lech introduction} is similar to the strategy in the equal characteristic case proved in \cite[Theorem 5.8]{HMQS}. However, the main reason that the argument in \cite{HMQS} does not carry to mixed characteristic is because it crucially relies on a refined version of Lech's inequality for ideals with a fixed number of generators in equal characteristic (see \cite[Proposition 5.7]{HMQS}, recalled in Theorem~\ref{theorem: Hanes}) which essentially follows from work of Hanes \cite[Theorem 2.4]{Hanes} on Hilbert--Kunz multiplicity. We do not know whether such a version of Lech's inequality holds in mixed charactersitic (though we expect it to hold, see Conjecture~\ref{conj: Hanes}). Due to the absence of this ingredient in mixed characteristic, we prove Theorem~\ref{theorem: uniform Lech introduction} by carefully passing to certain associated graded rings to reduce to an equal characteristic setting so that \cite[Proposition 5.7]{HMQS} can be applied.

On the other hand, our strategy in the proof of Theorem~\ref{theorem: uniform Lech introduction} does allow us to obtain a weaker version of \cite[Proposition 5.7]{HMQS} valid in all characteristics for integrally closed ideals. A value of this result is not just in mixed characteristic, it also removes the need of
a reduction modulo $p$ argument
used in characteristic $0$ to deduce \cite[Theorem 5.8]{HMQS} from the result of Hanes.

\begin{theorem}[=Corollary~\ref{cor: weak Hanes}]
Let $d \geq 2$ and $N \geq d$ be two positive integers. Then there exists a constant $c = c(N, d) \in (0, 1)$ such that for any Noetherian local ring $(R, \m)$ of dimension $d$ and any $\m$-primary integrally closed ideal $I$ which can be generated by $N$ elements we have
\[
\eh(I) \leq d! c \eh(R) \length (R/I).
\]
\end{theorem}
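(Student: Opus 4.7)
The plan is to reduce to Hanes' equal-characteristic theorem (Theorem~\ref{theorem: Hanes}) by passing to the associated graded ring $G = \gr_\m(R)$, which is always equi-characteristic as a standard graded algebra over the residue field $k = R/\m$. This is the same strategy driving the proof of Theorem~\ref{theorem: uniform Lech}; the present corollary extracts from that argument a constant depending only on $(N,d)$, using the integrally closed and $N$-generator hypotheses to obtain uniformity in $R$.

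I would first reduce to $R$ complete, since completion preserves dimension, length, multiplicity, number of generators, and the integrally closed property for $\m$-primary ideals. Next, pass to $G = \gr_\m(R)$, which has $\dim G = d$ and $\eh(G) = \eh(R)$. The central step is then to construct, from the integrally closed $\m$-primary ideal $I$ with at most $N$ generators, an auxiliary ideal $I^\sharp \subseteq G$ such that (i) the number of generators of $I^\sharp$ is bounded in terms of $(N,d)$, (ii) $\length_G(G/I^\sharp) \leq C_1(d)\, \length_R(R/I)$, and (iii) $\eh_R(I) \leq C_2(d)\, \eh_G(I^\sharp)$, for constants depending only on $d$. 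Applying Theorem~\ref{theorem: Hanes} to $I^\sharp$ in the equi-characteristic ring $G$ then yields a uniform $c' \in (0,1)$ depending only on $(N,d)$ with
\[
\eh_G(I^\sharp) \leq d!\, c'\, \eh(G)\, \length_G(G/I^\sharp),
\]
and chaining the inequalities produces the desired $c(N,d) \in (0,1)$.

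The main obstacle is the simultaneous control of all three properties in the construction of $I^\sharp$. Using the full initial ideal $\gr_\m(I) \subseteq G$ preserves the colength exactly but can require many more than $N$ generators, while using the ideal generated by initial forms of a fixed $N$-generator set of $I$ has at most $N$ generators but tends to be strictly smaller than $\gr_\m(I)$, inflating the colength in the wrong direction. The integrally closed hypothesis is precisely the input that should resolve this tension, presumably through the Rees valuation description of integral closure or the normality of the associated Rees algebra, which constrain how far the initial behavior of $I$ in $G$ can deviate from $I$ itself. Once this control is achieved, step (iii) should follow by upper semicontinuity of multiplicity along the flat degeneration of $R$ to $G$ given by the extended Rees algebra.
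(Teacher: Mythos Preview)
Your proposal has a genuine gap precisely where you flag the main obstacle: you never construct $I^\sharp$. Taking $I^\sharp = \initial_\m(I)$ gives (ii) and (iii) with $C_1 = C_2 = 1$, but there is no reason for $\mu(\initial_\m(I))$ to be bounded in terms of $(N,d)$ alone, even when $I$ is integrally closed; taking $I^\sharp$ to be generated by the initial forms of $N$ generators of $I$ gives (i), but this ideal need not have finite colength, and when it does, the colength is not bounded above in terms of $\length(R/I)$. The appeal to Rees valuations or normality of the Rees algebra is not an argument---those tools control integral closures of powers of $I$, not the gap between $\mu(I)$ and $\mu(\initial_\m(I))$. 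Note also that any slack $C_1 C_2 > 1$ would have to be absorbed by the Hanes constant $c'$, so in effect you are forced to $C_1 = C_2 = 1$, which makes (i) the entire problem. Finally, the argument in Theorem~\ref{theorem: main technical dim 2} that you cite as a model does pass to an associated graded ring and invoke Theorem~\ref{theorem: Hanes}, but the generator bound used there depends on $R$ (through the constant $C$ and the ring $S$), so that argument does not extract a constant depending only on $(N,d)$.

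The paper takes a different route. It does not pass to $\gr_\m(R)$; it passes to $\gr_z(S) \cong (S/zS)[T]$ for a \emph{general linear form} $z$ (after replacing $R$ by a quotient $S$ of positive depth so that $z$ is regular). The integrally closed hypothesis enters only through $\m$-fullness and Watanabe's identity $\mu(I) = \length(\widetilde{R}/(z,I)\widetilde{R}) + \mu(I(\widetilde{R}/z\widetilde{R}))$ from Remark~\ref{remark: property m-full}. This identity is the whole point: it bounds, by $N$ alone, the number of homogeneous generators of $\initial_z(IS)$ (for $d=2$, via Corollary~\ref{cor: quadratic bound}) or the colength of each $T$-graded piece of $\initial_z(IS)$ (for $d>2$). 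The case $d>2$ then finishes with Lemma~\ref{lemma: Mumford} and Lemma~\ref{lemma: Lech colength}, \emph{without} using Theorem~\ref{theorem: Hanes} at all; the case $d=2$ finishes with a direct Hanes-style estimate. By going straight to $\gr_\m(R)$ you bypass the one place where $\m$-fullness gives quantitative control, which is exactly why your construction stalls.
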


\subsection*{Acknowledgement} We would like Bernd Ulrich for suggesting the proof of Lemma~\ref{lemma: Ulrich} and the anonymous referee for a greatly improved bound in Lemma~\ref{lemma: bound generators}. We would also like to thank Craig Huneke and Pham Hung Quy for the joint project \cite{HMQS}. The first author is partially supported by NSF Grant DMS \#1901672, NSF FRG Grant \#1952366, and a fellowship from the Sloan Foundation.
The second author had support of a fellowship from ``la Caixa'' Foundation (ID 100010434) and from the European Union’s Horizon 2020 research and innovation programme under the Marie Skłodowska-Curie grant agreement No 847648. The fellowship code is ``LCF/BQ/PI21/11830033''.


\section{Preliminaries}

Throughout this article, all rings are commutative, Noetherian, with multiplicative identity $1$. We use $\length (M)$ to denote the length of a finite $R$-module $M$ and $\mu(M)$ to denote the minimal number of generators of $M$.

\begin{definition}\label{def HS}
Let $(R, \mf m)$ be a Noetherian local ring of dimension $d$ and $I$ be an $\mf m$-primary ideal.
The {\it Hilbert--Samuel multiplicity} of $I$ is defined as
\[
\eh(I) = \lim_{n\to\infty}\frac{d!\length(R/I^n)}{n^d}.
\]
\end{definition}

It is well-known that $\eh(I)$ is always a positive integer. The Hilbert--Samuel multiplicity is closely related to integral closure. Recall that an element $x\in R$ is integral over an ideal $I$ if it satisfies an equation of the form $x^n + a_{1}x^{n-1} + \cdots + a_{n-1}x+ a_n=0$ where $a_k \in I^k$. The set of all elements $x$ integral over $I$ is an ideal and is denoted by $\overline{I}$, called the integral closure of $I$. The Hilbert--Samuel multiplicity is an invariant of the integral closure, i.e., $\eh(I) = \eh(\overline{I})$. Thus, we always have an inequality $\eh(I)/\length(R/I) \leq \eh(\overline{I})/\length(R/\overline{I})$.
In particular, Conjecture~\ref{conj: asymptoticLech} can be restricted to integrally closed ideals. Another related concept is $\m$-full ideals. We briefly recall the definition following \cite{JWatanabe}. 

\begin{definition}\label{def m-full}
Let $(R, \mf m)$ be a Noetherian local ring. Let $\m=(x_1,\dots,x_n)$ and let $\widetilde{R} = R(t_1,\dots,t_n)$, and consider the general linear form $z = t_1x_1+\cdots+t_nx_n$. An ideal $I$ of $R$ is called $\mf m$-{\it full} if $\mf mI\widetilde{R} : z = I\widetilde{R}$.
\end{definition}

The following remark summarizes some useful properties of $\m$-full ideals 

\begin{remark}With notation as in Definition~\ref{def m-full}, we have
\label{remark: property m-full}
\begin{enumerate}
\item If $I$ is $\mf m$-full, then $I\widetilde{R}:z=I\widetilde{R}:\mf m$ (\cite[Lemma~1]{JWatanabe}).
\item If  $I$ is integrally closed, then $I$ is $\mf m$-full  or $I = \sqrt{(0)}$ (\cite[Theorem~2.4]{Goto}).
\item If $I$ is $\mf m$-primary and $\mf m$-full, then $\mu(I) \geq \mu(J)$ for any ideal $J\supseteq I$ (\cite[Theorem~3]{JWatanabe}).
\item If $I$ is $\m$-primary and $\mf m$-full, then $\mu(I) = \length(\widetilde{R}/(z, I)\widetilde{R}) +
\mu(I(\widetilde{R}/z\widetilde{R}))$ (\cite[Theorem~2]{JWatanabe}).
\end{enumerate}
\end{remark}

\subsection* {The associated graded ring}
Our key argument relies on passage to certain associated graded rings
in order to transfer to the equal characteristic setting. We record some notations and simple facts about initial (form) ideals in associated graded rings. Let $J\subseteq R$ be an ideal and let $\gr_J(R)=\bigoplus_n J^n/J^{n+1}$ be the associated graded ring of $R$ with respect to $J$. If $I\subseteq R$ is another ideal then we will use
$$\initial_J(I):= \bigoplus_n \frac{I\cap J^n + J^{n + 1}}{J^{n + 1}}\subseteq \gr_J(R)$$
to denote the initial ideal of $I$ (or form ideal in the notation of \cite{LechMultiplicity}) in the associated graded ring. Now let $(R,\m)$ be a Noetherian local ring and $I\subseteq R$ be an $\m$-primary ideal. It is well-known and easy to check that $\length(\gr_J(R)/\initial_J(I)) = \length(R/I)$. Furthermore, since $\initial_J(I)^n\subseteq \initial_J(I^n)$ and $\dim(R)=\dim(\gr_J(R))$, we have $\eh(I) \leq \eh(\initial_J(I))$.


\begin{lemma}\label{lemma: double graded}
Let $(R, \mf m)$ be a Noetherian local ring.
Then $\gr_{(\mf m, T)} (R[T]) = \gr_{\mf m} (R)[T]$.
Moreover, via this identification,
the initial ideal of a $T$-homogeneous ideal $I = \sum_k I_k T^k$ is
$\sum_k  \initial_{\mf m} (I_k) T^k$. 
\end{lemma}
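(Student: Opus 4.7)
The plan is to unwind both sides explicitly using the $T$-grading on $R[T]$, and check that the two descriptions agree degree by degree.

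The first step is to compute the powers $(\mathfrak{m}, T)^n$ as $T$-graded $R$-submodules of $R[T]$. Since $(\mathfrak{m}, T)$ is generated by elements of $\mathfrak{m}$ and by $T$, any product of $n$ such generators has the form $m_1 \cdots m_k \cdot T^{n-k}$ with $m_i \in \mathfrak{m}$. Collecting by $T$-degree, this gives the clean formula
\[
(\mathfrak{m}, T)^n \;=\; \bigoplus_{j \geq 0} \mathfrak{m}^{\max(n - j,\, 0)}\, T^j,
\]
where $\mathfrak{m}^0 := R$. Passing to the quotient $(\mathfrak{m},T)^n/(\mathfrak{m},T)^{n+1}$ and again separating by $T$-degree $j$, the piece with $0 \leq j \leq n$ becomes $(\mathfrak{m}^{n-j}/\mathfrak{m}^{n-j+1}) T^j$, while for $j > n$ the piece vanishes (the numerator and denominator are both $R$ up through $j = n$, and both drop to lower powers of $\mathfrak{m}$ thereafter in the same way). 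Summing over $n$ recovers the total-degree $n$ component $\bigoplus_{i + j = n} (\mathfrak{m}^i/\mathfrak{m}^{i+1}) T^j$ of $\gr_{\mathfrak{m}}(R)[T]$; the ring structure agrees because multiplication on both sides is induced from multiplication in $R[T]$ and respects the $T$-grading. This gives the first claim.

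For the second claim, let $I = \sum_k I_k T^k$ be a $T$-homogeneous ideal. Because intersection of a $T$-graded submodule with a $T$-graded submodule is computed termwise, the first step yields
\[
I \cap (\mathfrak{m}, T)^n \;=\; \bigoplus_k \bigl(I_k \cap \mathfrak{m}^{\max(n - k,\, 0)}\bigr) T^k.
\]
Substituting into the definition
\[
\initial_{(\mathfrak{m}, T)}(I) \;=\; \bigoplus_n \frac{I \cap (\mathfrak{m}, T)^n + (\mathfrak{m}, T)^{n+1}}{(\mathfrak{m}, T)^{n+1}}
\]
and breaking each numerator and denominator into $T$-graded pieces, the component in total degree $n$ and $T$-degree $k$ (with $k \leq n$) becomes
\[
\frac{(I_k \cap \mathfrak{m}^{n - k}) + \mathfrak{m}^{n - k + 1}}{\mathfrak{m}^{n - k + 1}} \cdot T^k,
\]
which is precisely the degree $n - k$ piece of $\initial_{\mathfrak{m}}(I_k)$, multiplied by $T^k$. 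For $k > n$ both the numerator and denominator are $R T^k$, so the contribution vanishes. Summing over $n$ and $k$ identifies $\initial_{(\mathfrak{m}, T)}(I)$ with $\sum_k \initial_{\mathfrak{m}}(I_k) T^k$.

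No substantive obstacle is expected; the only care needed is bookkeeping with the bigrading (total $(\mathfrak{m},T)$-degree versus $T$-degree) and remembering the convention $\mathfrak{m}^0 = R$ so that the corner case $n = k$ is handled uniformly.
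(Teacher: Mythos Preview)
Your proof is correct and follows essentially the same approach as the paper: compute $(\mathfrak{m},T)^n$ as a $T$-graded $R$-submodule, intersect termwise with the $T$-homogeneous ideal $I$, and read off the graded pieces of the initial ideal. The only cosmetic difference is that the paper obtains the first claim as the special case $I_k = R$ of the second, whereas you verify it separately before turning to general $I$.
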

\begin{proof}
The first claim follows from the second by considering the unit ideal (so that $I_k=R$ for all $k$).
We know that the image of $I$ on the left hand side is
\[
\initial_{(\mf m, T)} (I) = \bigoplus_{n \geq 0} \frac{I \cap (\mf m, T)^n}{I \cap (\mf m, T)^{n + 1}}.
\]
Since $I \cap (\mf m, T)^n = \sum_{k = 0}^{n-1} (I_k \cap \mf m^{n - k}) T^k + \sum_{k \geq n} I_kT^k$,
by restricting to fixed $T$-degree components
we may further decompose
\[
\initial_{(\mf m, T)} (I) = \bigoplus_{n \geq 0} \bigoplus_{k = 0}^n
\frac{I_k \cap \mf m^{n - k}}{I_k \cap \mf m^{n +1 - k}} T^k
=
\bigoplus_{k \geq 0} \left( \bigoplus_{n \geq k}
\frac{I_k \cap \mf m^{n - k}}{I_k \cap \mf m^{n +1 - k}} \right ) T^k
= \bigoplus_{k \geq 0}  \initial_{\mf m} (I_k) T^k.
\qedhere \]
\end{proof}

\section{Main Results}

In this section we prove our main results. We begin with a few lemmas.

\begin{lemma}\label{lemma: bound generators}
Let $(R, \mf m)$ be a Noetherian local ring and $I = I_0 + I_1 T + I_2T^2 + \cdots $ be a $T$-homogeneous
ideal of finite colength in $R[T]$. Then $\mu(I) \leq \mu(I_0) + \length (R/I_0)$.
In particular, if
$\dim(R)=1$, then $\mu(I)\leq \length (R/I_0) + \eh(R) + \length (\lc_{\mf m}^0 (R))$.
\end{lemma}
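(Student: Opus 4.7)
The plan is to handle the two inequalities separately. For the first bound, the strategy is a direct graded computation. Since $TI\subseteq I$, the components $I_k$ form an ascending chain which stabilizes at $R$ by the finite colength hypothesis, and each $I_k$ is $\m$-primary. The number of generators is $\mu(I) = \sum_{k \geq 0} \dim_k (I/(\m,T)I)_k$, where, using $(\m I)_k = \m I_k T^k$ and $(TI)_k = I_{k-1} T^k$, the degree-$k$ piece simplifies to $I_k/(\m I_k + I_{k-1})$ with the convention $I_{-1}=0$. For $k\geq 1$ this is a quotient of $I_k/I_{k-1}$, so its dimension is at most $\length(I_k/I_{k-1}) = \length(R/I_{k-1}) - \length(R/I_k)$. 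Summing over $k$ and telescoping (using $I_k = R$ for $k\gg 0$) gives
\[
\mu(I) \leq \mu(I_0) + \sum_{k\geq 1}\bigl(\length(R/I_{k-1}) - \length(R/I_k)\bigr) = \mu(I_0) + \length(R/I_0).
\]

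The second statement reduces to showing $\mu(I_0) \leq \eh(R) + \length(\lc_\m^0(R))$ for any $\m$-primary $I_0$ in a one-dimensional $R$. The plan is to pass to a Cohen--Macaulay quotient and quantify the defect. First, faithfully flat base change along $R \to R(t)$ lets us assume the residue field is infinite, since this preserves $\mu$, $\eh(R)$, $\length(R/I_0)$, and $\length(\lc_\m^0(R))$. Set $H = \lc_\m^0(R)$ and $\widetilde R = R/H$: this is one-dimensional Cohen--Macaulay, with $\eh(\widetilde R) = \eh(R)$ because $\dim H = 0 < \dim R$. Choose a minimal reduction $(x)$ of $\m\widetilde R$, so that $x$ is a nonzerodivisor and $\length(\widetilde R/x\widetilde R) = \eh(R)$. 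For $J = I_0 \widetilde R$, tensoring $0 \to J \to \widetilde R \to \widetilde R/J \to 0$ with $\widetilde R/x\widetilde R$ yields a four-term exact sequence; the key cancellation $\length(J/xJ) = \length(\widetilde R/x\widetilde R)$ follows from the identity $\length(\ker\varphi) = \length(\coker\varphi)$ applied to multiplication by $x$ on the finite-length module $\widetilde R/J$. Since $xJ\subseteq\m J$, this gives
\[
\mu(J) \leq \length(J/\m J) \leq \length(J/xJ) = \eh(R).
\]

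It remains to compare $\mu(I_0)$ with $\mu(J)$. A direct quotient computation yields $\mu(J) = \dim_k I_0/(\m I_0 + (I_0 \cap H))$, so
\[
\mu(I_0) - \mu(J) = \dim_k (I_0 \cap H)/(\m I_0 \cap H) \leq \length(I_0 \cap H) \leq \length(H).
\]
Combining these estimates with the first inequality delivers the claimed bound $\mu(I) \leq \length(R/I_0) + \eh(R) + \length(\lc_\m^0(R))$.

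The main obstacle is the second inequality: the graded bookkeeping for the first bound is routine, but the one-dimensional bound requires both a reduction to the Cohen--Macaulay case and a precise accounting for how the generator count changes when we kill $\lc_\m^0(R)$. The cleanest route is the minimal-reduction argument above, which isolates $\eh(R)$ in the CM setting and leaves exactly a $\length(\lc_\m^0(R))$ correction in the non-CM setting.
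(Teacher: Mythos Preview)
Your proof is correct and, for the first inequality, is essentially the paper's argument rephrased: where the paper explicitly lists generators of $I$ coming from $I_0$ together with lifts of generators of each $I_{k+1}/I_k$, you compute the graded pieces $(I/(\mf m,T)I)_k = I_k/(\mf m I_k + I_{k-1})$ directly via graded Nakayama; both lead to the same telescoping sum $\mu(I_0) + \sum_{k\geq 1}\length(I_k/I_{k-1})$. For the second assertion the paper simply cites \cite[Lemma~5.5]{HMQS}, whereas you supply a self-contained proof (pass to $R/\lc_{\mf m}^0(R)$, use a minimal reduction $x$ and the kernel--cokernel length identity to get $\mu(J)\leq \length(J/xJ)=\eh(R)$, then bound the defect $\mu(I_0)-\mu(J)$ by $\length(\lc_{\mf m}^0(R))$); this is exactly the standard argument behind that citation, so nothing is genuinely different here either.
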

\begin{proof}
We have containments $I_0 \subseteq I_1 \subseteq \cdots$ and this sequence
will eventually include the unit ideal $R$.
If $x_{k,1}, \ldots, x_{k, D_k}$ for $k \geq 0$ are such that
their images form a minimal generating set for $I_{k+1}/I_k$, then
it is easy to see that $I$ can be generated by
the generators of $I_0$ and $\{x_{k, i}T^{n_k}, T^{n_C}\}_{k = 0, i = 1}^{k = C - 1, i = D_k}$.
Therefore
\[
\mu(I) \leq \mu(I_0) + \sum_{k \geq 0} \mu (I_{k+1}/I_k)
\leq \mu(I_0) + \sum_{k \geq 0} \length (I_{k+1}/I_k)
= \mu(I_0) + \length (R/I_0).
\]
For the second assertion note that $\mu(I_0) \leq \eh(R) + \length (\lc_{\mf m}^0 (R))$ (for example, see \cite[Lemma 5.5]{HMQS}).
\end{proof}

We next prove a local Bertini-type result, this should be well-known to experts and the case of $s = 0$ follows from \cite[Theorem]{Hochster}.
We thank Bernd Ulrich for suggesting the argument.

\begin{lemma}
\label{lemma: Ulrich}
Let $(R,\m)$ be a Noetherian local ring which satisfies Serre's condition $(R_s)$ and has dimension at least $s+2$. If $\m=(x_1,\dots,x_n)$, then $R(t_1,\dots,t_n)/(t_1x_1+\cdots+t_nx_n)$ still satisfies $(R_s)$.
\end{lemma}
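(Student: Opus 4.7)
The plan is, for each prime $\mathfrak{P}$ of $\widetilde{R}/(z)$ with $\hght(\mathfrak{P}) \leq s$, to identify $(\widetilde{R}/(z))_{\mathfrak{P}}$ with a localization of a polynomial ring over $R_Q$ (where $Q$ is the contraction of $\mathfrak{P}$'s preimage to $R$) via a linear change of coordinates, and then apply the hypothesis $(R_s)$ on $R$. Denote the preimage of $\mathfrak{P}$ in $\widetilde{R} = R(t_1,\ldots,t_n)$ by $\widetilde{\mathfrak{P}}$ and set $Q := \widetilde{\mathfrak{P}} \cap R$, where $z = t_1 x_1+\cdots+t_n x_n$.

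First I claim $Q \neq \m$. The fiber $\widetilde{R}/\m\widetilde{R}$ equals $\kappa(\m)(t_1,\ldots,t_n)$, which is a field; hence $\m\widetilde{R}$ is the unique prime of $\widetilde{R}$ lying over $\m$, and it has height $\dim R \geq s+2$. By Krull's principal ideal theorem, $\hght(\mathfrak{P}) \geq \hght(\widetilde{\mathfrak{P}}) - 1$, so $Q = \m$ would force $\hght(\mathfrak{P}) \geq s+1$, a contradiction. Thus $Q \subsetneq \m$, and after reindexing we may assume $x_n \notin Q$, so that $x_n$ is a unit in $R_Q$.

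Setting $t_n' := t_n + x_n^{-1}(x_1 t_1 + \cdots + x_{n-1}t_{n-1})$ gives an $R_Q$-algebra identification $R_Q[t_1,\ldots,t_n] = R_Q[t_1,\ldots,t_{n-1},t_n']$, in which $z = x_n t_n'$ and hence $(z) = (t_n')$. Since $\widetilde{R}_{\widetilde{\mathfrak{P}}}$ is the localization of $R_Q[t_1,\ldots,t_n]$ at the prime $\widetilde{\mathfrak{P}} \cap R_Q[t_1,\ldots,t_n]$, which contains $z$ and hence $t_n'$ (because $x_n$ is a unit), quotienting by $(t_n')$ and localizing yields
\[
(\widetilde{R}/(z))_{\mathfrak{P}} = R_Q[t_1,\ldots,t_{n-1}]_{\overline{\mathfrak{p}}}
\]
for some prime $\overline{\mathfrak{p}}$ of $R_Q[t_1,\ldots,t_{n-1}]$ contracting to $QR_Q$. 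Therefore $\hght(Q) = \hght(QR_Q) \leq \hght(\overline{\mathfrak{p}}) = \hght(\mathfrak{P}) \leq s$, so $(R_s)$ on $R$ makes $R_Q$ a regular local ring; a polynomial ring over a regular local ring is regular at every prime, hence $R_Q[t_1,\ldots,t_{n-1}]_{\overline{\mathfrak{p}}}$ is regular, as required.

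The crux of the argument, and the step that consumes the $\dim R \geq s+2$ hypothesis, is excluding $Q = \m$: otherwise all of the $x_i$ would lie in $Q$, the element $x_n^{-1}$ would not be available over $R_Q$, and no linear change of coordinates would allow us to write $z$ as a polynomial generator.
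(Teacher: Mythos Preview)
Your proof is correct and follows essentially the same route as the paper's: contract to a prime $Q$ of $R$, rule out $Q=\mathfrak m$ using the dimension hypothesis, use that some $x_i$ is a unit in $R_Q$ to identify $R_Q[t_1,\dots,t_n]/(z)$ with a polynomial ring over $R_Q$, and then verify $\operatorname{ht}(Q)\le s$ so that $R_Q$ is regular. The only organizational difference is that the paper works upstairs in $R[t_1,\dots,t_n]$ and argues $\operatorname{ht}(Q)\le s$ first (deducing $Q\ne\mathfrak m$ as a byproduct, since $\operatorname{ht}(P)=\operatorname{ht}(Q)$ would force $P=Q[t_1,\dots,t_n]$ and then $z\in P$ would put every $x_i$ into $Q$), whereas you first exclude $Q=\mathfrak m$ via the fiber/dimension argument and only afterwards bound $\operatorname{ht}(Q)$ through the chain $\operatorname{ht}(Q)\le\operatorname{ht}(\overline{\mathfrak p})=\operatorname{ht}(\mathfrak P)$.
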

\begin{proof}
Let $P$ be a height $s+1$ prime in $S = R[t_1,\ldots, t_n]$ that contains $z=t_1x_1+\cdots+t_nx_n$. It is enough to show that $S_P/zS_P$ is regular.

Let $Q = P \cap R$. We first claim that $\hght(Q) \leq s$. For if $\hght(Q)= s+1$, then we must have
$P = Q[t_1, \ldots, t_n]$, but then $P$ cannot contain $z$ because $Q \neq \mf m$ (since $\hght(\m)=\dim(R)\geq s+2>\hght(Q)$), which is a contradiction. Thus, without loss of generality,
we assume that $x_1 \notin Q$, so $R_Q[t_1, \ldots, t_n]/(z) \cong R_Q[t_2, \ldots, t_n]$ is regular because $R_Q$ is regular. Therefore, $(S/zS)_P \cong (R_Q[t_2, \ldots, t_n])_P$ is also regular.
\end{proof}

We will need the following version of Lech's inequality, which is proved in \cite{HMQS} using Hanes' work on Hilbert--Kunz multiplicity \cite[Theorem 2.4]{Hanes} and reduction mod $p>0$.

\begin{theorem}[{\cite[Proposition 5.7]{HMQS}}]
\label{theorem: Hanes}
Let $d \geq 2$ and $N \geq d$ be two positive integers. Then there exists a constant $c = c(N, d) \in (0,1)$ such that for any equal characteristic Noetherian local ring $(R, \m)$ of dimension $d$ and any $\m$-primary ideal $I$ with $\mu(I)\leq N$, we have
\[
\eh(I) \leq d! c \eh(R) \length (R/I).
\]
In fact, one can take $c=(1-\frac{1}{N^{1/(d-1)}})^{d-1}$.
\end{theorem}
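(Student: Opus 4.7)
My plan is to split the proof into the two equicharacteristic cases, following the approach alluded to in the excerpt: handle the positive characteristic case via Hanes' refined Hilbert--Kunz analogue of Lech's inequality from \cite[Theorem 2.4]{Hanes}, and deduce the characteristic zero case by reduction modulo $p$.

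In the positive characteristic case, assume $R$ has characteristic $p>0$ and $\mu(I)\le N$. The key input is Hanes' Theorem 2.4, which in its Hilbert--Kunz form asserts
\[
\ehk(I) \leq \bigl(1 - 1/N^{1/(d-1)}\bigr)^{d-1}\ehk(\m)\length(R/I).
\]
Hanes' argument exploits the fact that the Frobenius powers $I^{[q]}$ remain generated by $N$ elements (the $q$-th powers of a set of generators of $I$), together with a pigeonhole containment $I^{nN} \subseteq (a_1^n, \ldots, a_N^n)$ valid for any generators $a_1,\ldots,a_N$ of $I$ and any integer $n$. To translate this bound into the desired Hilbert--Samuel statement with the \emph{same} constant, I would re-run Hanes' averaging argument with ordinary powers $I^n$ in place of Frobenius powers $I^{[q]}$: the pigeonhole containment is characteristic-free and is the real engine of the estimate, so the resulting asymptotic for $\length(R/I^n)$ produces the constant $c(N,d)$ applied directly to $\eh(I)$ and $\eh(\m)$, without any loss of factor from translating between the two multiplicities.

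For the characteristic zero case I would use standard reduction modulo $p$: replace $R$ by a finitely generated $\mathbb Z$-subalgebra $A$ of $\widehat R$ containing lifts of a generating set of $\m$ and of $I$, and for all but finitely many primes $p$ pass to the localization of $A/pA$ at a maximal ideal lying over $\m \cap A$. By generic freeness and standard base-change arguments, one can arrange that the dimension, the Hilbert--Samuel multiplicity of the maximal ideal, the colength of the reduced ideal, and its minimal number of generators are all preserved, so the positive characteristic bound transfers back to $R$. The hardest step is the first one: extracting the \emph{explicit} constant $(1-1/N^{1/(d-1)})^{d-1}$ from the pigeonhole/averaging argument rather than merely some $c<1$, and doing so in a way that survives the passage between Hilbert--Kunz and Hilbert--Samuel formulations; the naive comparisons such as $\eh(I)\le d!\ehk(I)$ and $d!\ehk(\m)\ge \eh(\m)$ point in the wrong directions for a direct transfer, so the Hilbert--Samuel case really has to be argued in parallel with the Hilbert--Kunz one rather than deduced from it.
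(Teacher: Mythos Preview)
Your two-step outline (Hanes in positive characteristic, then reduction modulo $p$) is exactly what the paper indicates: the theorem is quoted from \cite{HMQS} without proof and described as being obtained from precisely those two ingredients. The difficulty is in your execution of the first step.

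You assert that the naive comparisons between Hilbert--Kunz and Hilbert--Samuel multiplicities ``point in the wrong directions'' and therefore propose to re-run Hanes' argument with ordinary powers. Both parts of this are problematic. First, the naive comparisons \emph{do} work. Hanes' optimization of the $N$-generator surjection bound yields $\eh(I)\le d!\,c\,\ehk(I)$; one then chains this with the Hilbert--Kunz Lech inequality $\ehk(I)\le \ehk(\m)\,\length(R/I)$ (obtained by filtering $R/I$ by copies of $R/\m$ and noting each successive quotient $J_{i+1}^{[q]}/J_i^{[q]}$ is cyclic over $R/\m^{[q]}$) and with the standard fact $\ehk(\m)\le \eh(\m)$ (pass to a minimal reduction $J$ of $\m$ and use Lech's limit formula for parameter ideals to get $\ehk(\m)\le \ehk(J)=\eh(J)=\eh(\m)$). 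This gives the Hilbert--Samuel statement with exactly the constant $c=(1-N^{-1/(d-1)})^{d-1}$, no loss. The inequality you wrote down, $d!\,\ehk(\m)\ge \eh(\m)$, is indeed useless here, but it is not the relevant one.

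Second, your proposed ``re-run with ordinary powers'' has a real gap. Hanes' argument has two halves: a \emph{lower} bound on $\length(R/I^{[q]})$ from the $N$-generator surjection (this half is indeed characteristic-free for the ideals $(a_1^n,\dots,a_N^n)$), and an \emph{upper} bound, which in characteristic $p$ is supplied by the filtration argument above. That step uses Frobenius in an essential way, and there is no analogous bound for $\length(R/(a_1^n,\dots,a_N^n))$ in terms of $\eh(\m)\length(R/I)$ available in general; the pigeonhole containment $I^{nN}\subseteq(a_1^n,\dots,a_N^n)$ only bounds the colength in terms of $\eh(I)$ again, so combining the two halves yields nothing. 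This is precisely why the present paper, when it imitates Hanes' method outside characteristic $p$ in Theorem~\ref{theorem: dim 2 generators}, must first pass to a special graded situation over a one-dimensional base where the upper bound can be computed directly.
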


We now prove our main technical result.

\begin{theorem}\label{theorem: main technical dim 2}
Let $(R,\m)$ be a two-dimensional Noetherian complete local ring which satisfies Serre's condition $(R_0)$. If $\eh(R)>1$, then there exists $\epsilon>0$ such that $\eh(I)\leq 2(\eh(R)-\epsilon)\length(R/I)$ for all $\m$-primary ideals $I$.
\end{theorem}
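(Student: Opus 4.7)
The plan is to reduce to the equal-characteristic case of Theorem~\ref{theorem: uniform Lech introduction}, already established in \cite{HMQS}, by passing through an associated graded construction. First, since $\eh(I) = \eh(\overline I)$ while $\length(R/I) \geq \length(R/\overline I)$, I may assume $I$ is integrally closed, hence $\mathfrak m$-full by Remark~\ref{remark: property m-full}(2). Extending scalars to $\widetilde R := R(t_1,\dots,t_n)$ preserves both $\eh$ and colength, and allows me to work with the generic linear form $z := t_1 x_1 + \cdots + t_n x_n$. By Lemma~\ref{lemma: Ulrich} applied with $s = 0$ (valid because $\dim R \geq 2$), the $1$-dimensional quotient $\overline R := \widetilde R/(z)$ inherits $(R_0)$ from $R$; moreover $z$ is a superficial element, so $\eh(\overline R) = \eh(R)$.

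Next I form the associated graded ring $S := \gr_{(z)}(\widetilde R)$. Provided $z$ is arranged to be a nonzero-divisor, I obtain $S \cong \overline R[T]$, a $2$-dimensional equal-characteristic ring. Completing at $(\overline{\mathfrak m}, T)$ yields $\widehat S \cong \overline R[[T]]$, still a $2$-dimensional equal-characteristic complete local ring, which inherits $(R_0)$ from $\overline R$ since minimal primes of $\overline R[[T]]$ arise as $P[[T]]$ for $P \in \Min(\overline R)$ and localize to DVRs. Consequently $\eh(\widehat S_{\red}) = \eh(\widehat S) = \eh(R) > 1$. For the $\mathfrak m$-primary ideal $I$, set $I' := \initial_{(z)}(I \widetilde R) \subseteq S$; the standard properties of initial ideals recalled in the preliminaries give $\length(S/I') = \length(R/I)$ and $\eh(I) \leq \eh(I')$. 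Applying the equal-characteristic case of Theorem~\ref{theorem: uniform Lech introduction} to $\widehat S$ then produces $\varepsilon > 0$, depending only on $R$, such that $\eh(J) \leq 2(\eh(R) - \varepsilon)\length(\widehat S/J)$ for every $\mathfrak m_{\widehat S}$-primary ideal $J$. Specializing to $J := I' \widehat S$ and chaining the inequalities yields $\eh(I) \leq 2(\eh(R)-\varepsilon)\length(R/I)$, as desired.

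The main obstacle I anticipate is securing the clean isomorphism $\gr_{(z)}(\widetilde R) \cong \overline R[T]$: namely, that the generic linear form $z$ is actually a nonzero-divisor. This can fail when $R$ has depth zero or embedded associated primes, and a preparatory reduction (for example, killing $H^0_{\mathfrak m}(R)$, or exploiting the $\mathfrak m$-full structure of $I$ more carefully) is likely needed in those cases. A secondary technical point is to verify that $(R_0)$ genuinely ascends through the formal power series completion $\overline R \to \overline R[[T]]$, which comes down to describing the minimal primes of $\overline R[[T]]$ when $\overline R$ is a complete $1$-dimensional local ring with $(R_0)$ but possibly not reduced.
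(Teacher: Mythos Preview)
Your central claim---that $S := \gr_{(z)}(\widetilde R) \cong \overline R[T]$ is equal characteristic---is false in the mixed-characteristic case, and that is precisely the case the theorem is designed to handle. The degree-zero piece of $\gr_{(z)}(\widetilde R)$ is $\widetilde R/(z) = \overline R$, not the residue field; since $z$ is a minimal generator of $\mathfrak m\widetilde R$, the prime $p$ survives as a nonzero nonunit in $\overline R$ whenever $R$ has mixed characteristic $(0,p)$. Hence $\overline R[T]$ and its completion remain mixed characteristic, and you cannot invoke the equal-characteristic result from \cite{HMQS}. To force equal characteristic via an associated-graded passage one must grade by an ideal whose degree-zero quotient is a field, i.e.\ by the maximal ideal.

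The paper repairs this by taking a \emph{second} associated graded ring: after reaching $\overline R[T]$ it passes further to $\gr_{(\mathfrak m,T)}(\overline R[T]) \cong \gr_{\mathfrak m}(\overline R)[T]$, which genuinely contains the residue field. But then one cannot simply quote the equal-characteristic uniform Lech inequality, because the hypothesis $\eh(\widehat S_{\red}) > 1$ need not survive: $\gr_{\mathfrak m}(\overline R)$ may fail $(R_0)$, and its reduction may well have multiplicity $1$. Instead the paper bounds the number of generators of the doubly-initial ideal (Lemma~\ref{lemma: bound generators}) in terms of $\length(\overline R/I\overline R)$, restricts first to the case where this colength is at most a constant $C$ supplied by \cite[Proposition~4.10]{HMQS}, and applies the bounded-generator inequality Theorem~\ref{theorem: Hanes}; the complementary case $\length(\overline R/I\overline R) > C$ is then handled by induction on $\length(R/I)$ via \cite[Lemma~5.1]{HMQS}. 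Your reduction to integrally closed ideals and your concern about $z$ being a nonzerodivisor are both reasonable (the paper addresses the latter by first killing the lower-dimensional and embedded primary components), but the core mechanism of the argument is the one you are missing.
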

\begin{proof}
Let $P_1,\dots,P_n$ be the minimal primes of $R$ such that $\dim(R/P_i)=2$. Since $R$ is $(R_0)$, we know that $0$ has a primary decomposition $$0=P_1\cap P_2\cap \cdots \cap P_n \cap P_{n+1}\cap \cdots \cap P_m\cap Q_1\cap\cdots \cap Q_k$$ where $P_{n+1},\dots, P_m$ are (possibly) minimal primes of $R$ whose dimensions are less than $2$ and $Q_1,\dots,Q_k$ are (possibly) embedded components. If we replace $R$ by $\widetilde{R}=R/(P_1\cap\cdots\cap P_n)$, then it follows by the additivity formula for multiplicities that for all $\m$-primary ideals $I\subseteq R$, we have $\eh(I, R)=\eh(I, \widetilde{R})$ while $\length(R/I)\geq\length(\widetilde{R}/I\widetilde{R})$. It follows that
$$\frac{\eh(I)}{2\cdot\length(R/I)}\leq \frac{\eh(I\widetilde{R})}{2\cdot \length(\widetilde{R}/I\widetilde{R})}.$$
Therefore to prove the result for $R$, it is enough to establish it for $\widetilde{R}$ (note that $\eh(R)=\eh(\widetilde{R})$). Thus we may replace $R$ by $\widetilde{R}$ to assume that $R$ is reduced and equidimensional.

Let $x_1,\dots,x_n$ be a generating set of $\m$. Note that by Lemma~\ref{lemma: Ulrich}, $R(t_1,\dots,t_n)/(t_1x_1+\cdots + t_nx_n)$ satisfies $(R_0)$.
Since it is excellent, its completion still satisfies $(R_0)$. Note that
the depth of the completion of $R(t_1,\dots,t_n)$ is at least one (since this is true for $R$).
Therefore, after replacing $R$ by the completion of $R(t_1,\dots,t_n)$, we may assume that there exists a nonzerodivisor $z\in \m$ such that $z$ is a part of a minimal reduction of $\m$ and $S = R/zR$ is $(R_0)$, and that the residue field of $R$ is infinite.
Note that $\eh(S)=\eh(R)$ since $z$ is part of a minimal reduction of $\m$. By \cite[Proposition 4.10]{HMQS}, there exists $C$ such that for all ideals $J\subseteq S$ with $\length(S/J)>C$, we have that $\eh(J)\leq \frac{3}{2} \cdot \length(S/J)$. Let us fix this $C$.

We first consider an arbitrary $\m$-primary ideal $I\subseteq R$ such that $\length(S/IS)\leq C$. We take the associated graded ring of $R$ with respect to the ideal $(z)$, and we use $\initial_z(I)$ to denote the initial ideal of $I$ in $\gr_z (R)$. Since $z$ is a nonzerodivisor, we have $\gr_{z} (R) \cong S[T]$. It follows that
$$\initial_z(I)=I_0+I_1T+\cdots +I_{N-1}T^{N-1}+T^N$$
where $I_0\subseteq I_1\subseteq \cdots\subseteq I_{N-1}$ are $\m_S$-primary ideals in $S$. Note that our assumption on $I$ says that $\length(S/I_0)\leq C$, which is a constant that does not depend on $I$, $N$, or any of the $I_i$. Since $\eh(\initial_z(I))\geq \eh(I)$ and $\length(R/I)=\length(S[T]/\initial_z(I))$, we have
\begin{equation}
\label{equation 1}
\frac{\eh(I)}{2\cdot \length(R/I)} \leq \frac{\eh(\initial_z(I))}{2\cdot \length(S[T]/\initial_z(I))}.
\end{equation}

We next take the associated graded ring of $S[T]$ with respect to $(\mf m, T)$.
By Lemma~\ref{lemma: double graded} the initial ideal of $\initial_z(I)$ is
\[
J := \initial_{\mf m} (I_0) + \initial_{\mf m} (I_1)T + \cdots +  \initial_{\mf m} (I_{N-1})T^{N-1} + T^N \subseteq \gr_{\m}(S)[T].
\]
Note that $\eh(J)\geq \eh(\initial_z(I))$ and $\length(S[T]/\initial_z(I))=\length((\gr_{\m}S)[T]/J)$, thus we have
\begin{equation}
\label{equation 2}
\frac{\eh(\initial_z(I))}{2\cdot \length(S[T]/\initial_z(I))} \leq \frac{\eh(J)}{2 \cdot \length (R/J)}.
\end{equation}
By Lemma~\ref{lemma: bound generators}, $\mu(J)\leq C + \eh(S) + \length (\lc_{ \initial_{\mf m}(\m)}^0 (\gr_{\mf m} (S))$.
Since $\gr_{\mf m} (S)[T]$ contains a field, $S/\m S$, and has dimension two,
by Theorem~\ref{theorem: Hanes}, there exists a constant $0<\epsilon\ll1$
(which depends on $C$, $R$ and $S$, but not on $I$!)
such that
\begin{equation}\label{equation 3}
\frac{\eh(J)}{2 \cdot \length (R/J)} \leq (1 - \epsilon) \eh(\gr_{\mf m} (S)[T]) = (1 - \epsilon) \eh(R).
\end{equation}
Putting (\ref{equation 1}), (\ref{equation 2}), (\ref{equation 3}) together, we have proved the theorem for all $I$ such that $\length(S/IS)\leq C$.
We can further shrink $\epsilon$ to guarantee that $\frac{3}{2} < (1 - \epsilon) \eh(R)$ since $\eh(R)>1$.

Finally, we use induction on $\length(R/I)$ to show that $\epsilon$ works for all $\m$-primary ideal $I\subseteq R$. We may assume that $\length(S/IS)>C$, then by \cite[Lemma 5.1]{HMQS} and the second paragraph of the proof, we have
$$\frac{\eh(I)}{2\cdot\length(R/I)}\leq \max\left\{\frac{\eh(I:z)}{2\cdot \length(R/(I:z))}, \frac{\eh(IS)}{\length(S/IS)}\right\} \leq
\max \left\{(1 - \epsilon) \eh(R), \frac{3}{2} \right\} = (1 - \epsilon) \eh(R), $$
where for the second inequality we are using induction on the colength.
\end{proof}

We next deduce the higher dimensional case from the two-dimensional case via induction on dimension, this is similar to the strategy in \cite[Theorem 5.8]{HMQS}, the only difference is that here we use Lemma~\ref{lemma: Ulrich} instead of Flenner's result \cite[Lemma 5.4]{HMQS} (in equal characteristic).

\begin{corollary}
\label{cor: main  R0}
Let $(R, \m)$ be a Noetherian local ring of dimension $d$ such that $\widehat{R}$ satisfies Serre's condition $(R_0)$. If $d\geq 2$ and $\eh(R)>1$, then there exists $\epsilon>0$ such that $\eh(I)\leq d!(\eh(R)-\epsilon)\length(R/I)$ for all $\m$-primary ideals $I$.
\end{corollary}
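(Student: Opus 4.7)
My plan is to induct on $d$, with base case $d = 2$ being Theorem~\ref{theorem: main technical dim 2} applied after passing to $\widehat{R}$ (which preserves both the hypothesis $\eh(R) > 1$ and the $(R_0)$ condition on the completion, as well as the multiplicity and colength on the two sides of the target inequality). For the inductive step, I would assume $d \geq 3$ and that the result is known in dimension $d - 1$.

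First I would replace $R$ by $\widehat{R}$ so as to work in a complete local ring, and then reduce to $R$ being reduced and equidimensional exactly as in the opening of the proof of Theorem~\ref{theorem: main technical dim 2}: by the additivity formula for multiplicities, killing the intersection of the top-dimensional minimal primes preserves $\eh(R)$ and can only decrease $\length(R/I)$. Next I would pass to the completion of $R(t_1, \ldots, t_n)$, where $\m = (x_1, \ldots, x_n)$, so that the residue field becomes infinite and $z = t_1 x_1 + \cdots + t_n x_n$ is a nonzerodivisor that is part of a minimal reduction of $\m$. Applying Lemma~\ref{lemma: Ulrich} with $s = 0$ (using $d \geq 3$), together with the fact that completions of excellent rings preserve $(R_0)$, yields that $S := R/zR$ is a complete local ring of dimension $d - 1 \geq 2$ which satisfies $(R_0)$ and has $\eh(S) = \eh(R) > 1$.

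Now I would invoke the inductive hypothesis on $S$ to obtain $\epsilon_0 > 0$ such that
$$\eh(J) \leq (d-1)!(\eh(R) - \epsilon_0)\length(S/J)$$
for every $\m_S$-primary ideal $J \subseteq S$, shrinking $\epsilon_0$ further if needed so that $\epsilon_0 \leq \eh(R)(d!-1)/d!$ (which takes care of the smallest possible colength $\length(R/\m) = 1$). The desired bound on $R$ would then follow by induction on $\length(R/I)$, using the higher-dimensional analog of \cite[Lemma~5.1]{HMQS}:
$$\frac{\eh(I)}{d!\length(R/I)} \leq \max\left\{\frac{\eh(I:z)}{d!\length(R/(I:z))},\ \frac{\eh(IS)}{(d-1)!\length(S/IS)}\right\}.$$
Because $z \in \m$ acts nilpotently, hence as a zero-divisor, on the finite-length module $R/I$, we have $I:z \supsetneq I$, so $\length(R/(I:z)) < \length(R/I)$ and the colength induction controls the first entry by $\eh(R) - \epsilon_0$, while the inductive hypothesis on $S$ controls the second entry by the same quantity.

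The main technical worry I anticipate is making sure the inductive setup stays inside the hypothesis class at every reduction, namely that after the double passage to the completion of $R(\underline{t})$ and quotienting by $z$, the ring $S$ still satisfies $(R_0)$ and is complete; this is supplied by Lemma~\ref{lemma: Ulrich} together with the excellence of complete local rings. Beyond that, the argument is a direct adaptation of the final induction in Theorem~\ref{theorem: main technical dim 2}, replacing the appeal to the one-dimensional bound in \cite[Proposition~4.10]{HMQS} with the $(d-1)$-dimensional inductive statement.
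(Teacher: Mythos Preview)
Your proposal is correct and follows essentially the same route as the paper's proof: induction on $d$ with Theorem~\ref{theorem: main technical dim 2} as the base, then reducing dimension via a general linear form $z$ using Lemma~\ref{lemma: Ulrich}, and finishing with a colength induction based on \cite[Lemma~5.1]{HMQS}. The extra reduction to the reduced equidimensional case and the explicit shrinking of $\epsilon_0$ to handle $I=\m$ are harmless embellishments; the paper skips the former entirely and justifies the latter with the remark that the case $I=\m$ is ``obvious'' (which it is, since the $\epsilon$ coming from the $(d-1)$-dimensional inductive hypothesis already satisfies $\epsilon\leq \eh(R)(1-1/(d-1)!)\leq \eh(R)(1-1/d!)$).
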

\begin{proof}
We may assume that $R$ is complete. 
We use induction on $d \geq 2$. Theorem~\ref{theorem: main technical dim 2}
provides the base case. Suppose $d\geq 3$ and that $x_1, \ldots, x_n$ is a generating set for $\mf m$. We can replace $R$ by $R(t_1, \ldots, t_n)$. Then we consider $R' = R(t_1, \ldots, t_n)/(t_1x_1 + \cdots + t_nx_n)$. By Lemma~\ref{lemma: Ulrich}, $R'$ (and hence $\widehat{R'}$) still satisfies $(R_0)$, $\dim(R') = d- 1$, and $\eh(R') = \eh(R)$. By induction the assertion holds for $R'$. That is, there exists $\epsilon$ such that $\eh(J) \leq (d-1)!(\eh(R') - \epsilon) \length (R'/J)$ for any $\m$-primary ideal $J\subseteq R'$. We use induction on $\length (R/I)$ to show that the same $\epsilon$ works for $R$ (the initial case $I=\m$ is obvious). By \cite[Lemma 5.1]{HMQS} we have
\[
\frac{\eh(I)}{d!\length (R/I)}
\leq \max \left\{\frac{\eh(I:z)}{d!\length (R/(I:z))}, \frac{\eh(IR')}{(d-1)!\length (R'/IR')} \right\}
\leq \eh(R') - \epsilon=\eh(R)-\epsilon.  \]
where $z=t_1x_1 + \cdots + t_nx_n$. This completes the proof.
\end{proof}

Here is our uniform Lech's inequality, now valid in all characteristics.

\begin{theorem}[Uniform Lech's inequality]
\label{theorem: uniform Lech}
Let $(R, \mf m)$ be a Noetherian local ring of dimension $d \geq 2$. Suppose $\eh({\widehat{R}}_{\red})>1$. Then there exists $\varepsilon > 0$ such that for any $\m$-primary ideal $I$, we have
$$
\eh(I) \leq d!(\eh(R) - \varepsilon) \length (R/I).
$$
\end{theorem}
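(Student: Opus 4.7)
The plan is to reduce to Corollary~\ref{cor: main R0} applied to $\widehat{R}_{\red}$, and then transfer the estimate back to $R$ by using the additivity formula for multiplicity together with the classical Lech inequality (Theorem~\ref{theorem: Lech}) on the quotients $R/P$.

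First, since $\eh_R(I)$, $\length_R(R/I)$, and $\eh(R)$ are all unchanged under passage to $\widehat{R}$, I would assume $R$ is complete. Set $S = R/\sqrt{0}$; then $S$ is complete and reduced, hence satisfies Serre's condition $(R_0)$, and $\eh(S) = \eh(\widehat{R}_{\red}) > 1$ by hypothesis. Since $\dim S = d \geq 2$, Corollary~\ref{cor: main R0} produces an $\epsilon > 0$ such that
\[
\eh_S(J) \leq d!(\eh(S) - \epsilon)\length_S(S/J)
\]
for every $\m$-primary $J \subseteq S$. This is the only nontrivial input.

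Now let $P_1, \ldots, P_r$ be the minimal primes of $R$ with $\dim R/P_i = d$, and $\ell_i = \length(R_{P_i})$. Since each localization $S_{P_i}$ is a field (being the residue field of $R_{P_i}$), the additivity formula applied to $R$ and to $S$ gives
\[
\eh_R(I) = \sum_{i=1}^r \ell_i\eh_{R/P_i}(I(R/P_i)), \qquad \eh_S(IS) = \sum_{i=1}^r \eh_{R/P_i}(I(R/P_i)),
\]
so in particular $\eh(R) - \eh(S) = \sum_i (\ell_i - 1)\eh(R/P_i)$. Applying Theorem~\ref{theorem: Lech} to each complete local domain $R/P_i$ and bounding $\length_R(R/(I+P_i)) \leq \length_R(R/I)$ produces
\[
\eh_R(I) - \eh_S(IS) \leq \sum_i (\ell_i-1) \cdot d!\eh(R/P_i) \length_R(R/I) = d!(\eh(R) - \eh(S))\length_R(R/I).
\]
Since also $\length_S(S/IS) \leq \length_R(R/I)$ (as $S/IS$ is a quotient of $R/I$), combining with the Corollary~\ref{cor: main R0} bound for $\eh_S(IS)$ yields
\[
\eh_R(I) \leq d!(\eh(S) - \epsilon)\length_R(R/I) + d!(\eh(R) - \eh(S))\length_R(R/I) = d!(\eh(R) - \epsilon)\length_R(R/I),
\]
so $\varepsilon = \epsilon$ works. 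I foresee no real obstacle beyond Corollary~\ref{cor: main R0} itself; the clean cancellation between the ``extra'' term $\sum_i(\ell_i-1)\eh_{R/P_i}(I(R/P_i))$ and the multiplicity gap $\eh(R)-\eh(S)$ is exactly what is needed to transfer the $\epsilon$-improvement from $\widehat{R}_{\red}$ back to $R$ without any loss in the constant.
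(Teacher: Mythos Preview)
Your argument is correct and is essentially the approach the paper intends: the paper's proof simply cites \cite[Proof of Corollary~5.9]{HMQS} with Corollary~\ref{cor: main  R0} substituted for \cite[Theorem~5.8]{HMQS}, and the reduction you spell out---pass to $\widehat{R}$, apply Corollary~\ref{cor: main  R0} to $S=\widehat{R}_{\red}$, then use the additivity formula together with Lech's inequality on each $R/P_i$ to absorb the gap $\eh(R)-\eh(S)$---is exactly that argument. The cancellation you highlight, namely $\sum_i(\ell_i-1)\eh(R/P_i)=\eh(R)-\eh(S)$, is precisely the point of the proof.
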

\begin{proof}
This follows by the same argument as in \cite[Proof of Corollary 5.9]{HMQS}, we just replace the citation of \cite[Theorem 5.8]{HMQS} by Corollary~\ref{cor: main  R0}  above.
\end{proof}

Now we can prove Conjecture~\ref{conj: asymptoticLech}.

\begin{corollary}
\label{cor: asymptotic Lech}
Let $(R,\m)$ be a Noetherian local ring of dimension $d\geq 1$. Then we have $\eh(\widehat{R}_{\red}) > 1$ if and only if
\[
\lim_{N\to\infty} \sup_{\substack{\sqrt{I}=\m \\ \length(R/I)> N}} \left\{\frac{\eh(I)}{d!\length(R/I)} \right\}<\eh(R).
\]
\end{corollary}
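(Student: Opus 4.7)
\smallskip

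\noindent\textbf{Proof proposal.} The plan is to treat the two directions of the biconditional separately; essentially all of the work has already been done.

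For the forward direction, assume $\eh(\widehat{R}_{\red}) > 1$. When $d \geq 2$, Theorem~\ref{theorem: uniform Lech} directly supplies $\varepsilon > 0$ such that $\eh(I) \leq d!(\eh(R) - \varepsilon)\length(R/I)$ holds uniformly for every $\m$-primary ideal $I$. Dividing through by $d! \length(R/I)$, the supremum over $\m$-primary $I$ with $\length(R/I) > N$ is bounded by $\eh(R) - \varepsilon$ for every $N$, hence the limit is at most $\eh(R) - \varepsilon < \eh(R)$. The one-dimensional case was already established in \cite{HMQS} by an argument independent of characteristic, so I would simply cite it.

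For the converse, I would run the contrapositive: assuming $\eh(\widehat{R}_{\red}) = 1$, construct a sequence of $\m$-primary ideals $I_n$ with $\length(R/I_n) \to \infty$ and $\eh(I_n)/(d! \length(R/I_n)) \to \eh(R)$. The first step is to pass to $\widehat{R}$, using that any $\m\widehat{R}$-primary ideal containing a sufficiently large power $\m^N\widehat{R}$ descends to an ideal of $R$ with the same colength and Hilbert--Samuel multiplicity (via $R/\m^N = \widehat{R}/\m^N\widehat{R}$). Next, the associativity formula for multiplicities together with $\eh(\widehat{R}_{\red}) = 1$ forces $\widehat{R}$ to possess a unique minimal prime $\mathfrak{q}$ of dimension $d$ satisfying $\eh(\widehat{R}/\mathfrak{q}) = 1$, and Nagata's theorem applied to the complete local domain $S := \widehat{R}/\mathfrak{q}$ produces a regular local ring of dimension $d$. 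Denoting the maximal ideal of $S$ by $\n$ and taking $I_n$ to be the preimage of $\n^n$ in $\widehat{R}$, we get $\length(\widehat{R}/I_n) = \length(S/\n^n) \sim n^d/d!$, while additivity yields $\eh(I_n, \widehat{R}) = \length(\widehat{R}_{\mathfrak{q}}) \cdot \eh(\n^n, S) = \eh(R) \cdot n^d$, so the ratio converges to $\eh(R)$. This construction is characteristic-free and is already contained in \cite{HMQS}; I would simply invoke it.

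There is no genuine obstacle here: the sole point where the original argument of \cite{HMQS} required equal characteristic is precisely the uniform inequality, and that is now available in arbitrary characteristic as Theorem~\ref{theorem: uniform Lech}. The corollary is therefore assembled by combining the new Theorem~\ref{theorem: uniform Lech} (forward direction, $d \geq 2$) with the existing $d = 1$ forward direction and the converse construction from \cite{HMQS}.
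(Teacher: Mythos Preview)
Your proposal is correct and follows essentially the same route as the paper: the forward direction for $d\geq 2$ is exactly Theorem~\ref{theorem: uniform Lech}, the $d=1$ forward direction is cited from \cite{HMQS}, and the converse is the characteristic-free construction already in \cite{HMQS}. The only difference is cosmetic---you sketch the converse argument in more detail before deferring to \cite{HMQS}, whereas the paper cites \cite[Proposition~5.3]{HMQS} directly.
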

\begin{proof}
The ``if" direction was proved in \cite[Proposition 5.3]{HMQS}. For the ``only if" direction, the one-dimensional case was proved in \cite[Proposition 5.11]{HMQS}, and when $d\geq 2$, the result follows immediately from Theorem~\ref{theorem: uniform Lech}.
\end{proof}

\subsection{Uniform Lech's inequality for ideals with fixed number of generators}
We conjecture that Theorem~\ref{theorem: Hanes} holds without the assumption on characteristic. We are able to show this for \emph{integrally closed} (more generally $\mf m$-full) ideals, which is sufficient for giving a different proof of the Uniform Lech's inequality, see Remark~\ref{rmk: different proof}.
The proof is different  in dimension two and for higher dimensions.
We start with the former, for which we will need the following corollary of Lemma~\ref{lemma: bound generators}.

\begin{corollary}\label{cor: quadratic bound}
Let $(R, \mf m)$ be a Noetherian local ring and $I$ be an $\m$-full $\m$-primary ideal.
Let $\m=(x_1,\dots,x_n)$ and define
$S = R(t_1,\dots,t_n)/\lc^0_{\m} (R)R(t_1,\dots,t_n)$
with the general linear form $z = t_1x_1+\cdots+t_nx_n$.
Then $\initial_z(IS)$, the initial ideal of $IS$ in $\gr_z (S)$, can be generated by at most
$\mu(I)$ homogeneous elements.
\end{corollary}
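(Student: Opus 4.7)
The plan is to apply Lemma~\ref{lemma: bound generators} to $\initial_z(IS)$ and match the resulting bound against the formula for $\mu(I)$ in Remark~\ref{remark: property m-full}(4). Set $\widetilde R := R(t_1,\dots,t_n)$, so that $S = \widetilde R/\lc^0_\m(\widetilde R)$.

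The first step I would carry out is to verify that $z$ is a nonzerodivisor on $S$, since this unlocks the identification $\gr_z(S) \cong \bar S[T]$, with $\bar S := S/zS$ and $T$ a degree-one variable. For this one observes that the associated primes of $S$ are the extensions $P\widetilde R$ with $P \in \operatorname{Ass}(R) \setminus \{\m\}$, and for any such $P$ some $x_i \notin P$, so the image of $z = \sum_j t_j x_j$ in the domain $(R/P)(t_1,\dots,t_n)$ is a nonzero linear polynomial in the $t_j$, hence $z \notin P\widetilde R$.

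With $z$ a nonzerodivisor, $\initial_z(IS)$ takes the form $\bigoplus_{n\geq 0} J_n T^n$ in $\bar S[T]$, where $J_n \subseteq \bar S$ is the image of $(IS :_S z^n)$. It is $T$-homogeneous of finite colength (because $IS$ is $\m S$-primary), with $J_0 = (IS+zS)/zS$. Lemma~\ref{lemma: bound generators} applied with base ring $\bar S$ then produces a homogeneous generating set of size at most
\[
\mu(J_0) + \length(\bar S/J_0) = \mu\bigl((IS+zS)/zS\bigr) + \length\bigl(S/(IS+zS)\bigr).
\]

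Finally, the canonical surjection $\widetilde R \twoheadrightarrow S$ (whose kernel lies in $\m\widetilde R$) induces surjections $(I\widetilde R + z\widetilde R)/z\widetilde R \twoheadrightarrow (IS+zS)/zS$ and $\widetilde R/(z,I)\widetilde R \twoheadrightarrow S/(IS+zS)$. Both minimal number of generators and length can only drop under a surjection, so the displayed sum is bounded above by $\mu\bigl(I(\widetilde R/z\widetilde R)\bigr) + \length\bigl(\widetilde R/(z,I)\widetilde R\bigr)$, which equals $\mu(I)$ by Remark~\ref{remark: property m-full}(4) (applicable since $I$ is $\m$-primary and $\m$-full). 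Chaining the two inequalities yields $\mu(\initial_z(IS)) \leq \mu(I)$. The one step that requires genuine care is the nonzerodivisor claim for $z$ on $S$; after that, everything is a bookkeeping check that the Lemma~\ref{lemma: bound generators} bound lines up term by term with the Remark~\ref{remark: property m-full}(4) formula.
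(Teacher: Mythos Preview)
Your proof is correct and follows essentially the same approach as the paper: apply Lemma~\ref{lemma: bound generators} in $\bar S[T]\cong\gr_z(S)$ to bound $\mu(\initial_z(IS))$ by $\mu(I(S/zS))+\length(S/(I,z)S)$, then pass up to $\widetilde R$ and invoke Remark~\ref{remark: property m-full}(4). You are slightly more explicit than the paper about why $z$ is a nonzerodivisor on $S$ (the paper simply notes that $S$ has positive depth), but the argument is otherwise the same.
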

\begin{proof}
Since $S$ has positive depth, $z$ is a nonzerodivisor on $S$.
By Lemma~\ref{lemma: bound generators}, we know that $\initial_z(IS)$ can be generated by at most $\length (S/(I, z)) + \mu(I(S /zS))$
(homogeneous) elements. Both the minimal number of generators and the colength do not increase when passing to a quotient ring. Hence if we let $\widetilde{R} = R(t_1,\dots,t_n)$
then the above bound is no greater than
$
\length (\widetilde{R}/(I, z)\widetilde{R}) + \mu(I(\widetilde{R} /z\widetilde{R})).
$ Because $I$ is $\m$-full, we know that $\mu(I)=\mu(I(\widetilde{R} /z\widetilde{R})) + \length (\widetilde{R} /(I, z)\widetilde{R})$ by Remark~\ref{remark: property m-full}.
\end{proof}

\begin{theorem}\label{theorem: dim 2 generators}
For any Noetherian local ring $(R, \m)$ of dimension two and any $\m$-primary $\m$-full ideal $I$ which can be generated by $N$ elements we have
\[
\eh(I) \leq 2 \left (1 - \frac 1{2N-2}\right) \eh(R) \length (R/I).
\]
\end{theorem}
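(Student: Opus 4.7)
The plan is to adapt the two-step associated graded ring strategy of Theorem~\ref{theorem: main technical dim 2}, now using Corollary~\ref{cor: quadratic bound} to control the number of generators of the initial ideal produced at the first step. After the standard reductions (replacing $R$ by $R(t_1,\ldots,t_n)/\lc^0_\m(R)R(t_1,\ldots,t_n)$ to ensure the residue field is infinite and the depth is positive), I would choose $z = t_1 x_1 + \cdots + t_n x_n$ the general linear form realizing the $\m$-fullness of $I$, so that $z$ is a nonzerodivisor. Form the first associated graded ring $\gr_z R = B[T]$ with $B = R/zR$ one-dimensional, and set $J = \initial_z I \subseteq B[T]$. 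Corollary~\ref{cor: quadratic bound} then gives $\mu(J) \leq N$.

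To reach the equal-characteristic setting needed for Theorem~\ref{theorem: Hanes}, take the second associated graded ring $\gr_{(\m,T)}(B[T]) = \gr_\m(B)[T]$ via Lemma~\ref{lemma: double graded}; this is a two-dimensional graded ring containing the field $R/\m$. Let $K = \initial_{(\m,T)} J$. Iterated initial ideals satisfy $\eh(I) \leq \eh(K)$, $\length(R/I) = \length(\gr_\m(B)[T]/K)$, and $\eh(\gr_\m(B)[T]) = \eh(B) = \eh(R)$, since $z$ is part of a minimal reduction of $\m$. Once the generator bound $\mu(K) \leq 2N - 2$ is established, Theorem~\ref{theorem: Hanes} applied in dimension two immediately yields the stated inequality.

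The principal technical step is thus the bound $\mu(K) \leq 2N - 2$. Applying Lemma~\ref{lemma: bound generators} inside $\gr_\m(B)[T]$ gives $\mu(K) \leq \mu(\initial_\m(IB)) + a$, where $a = \length(B/IB)$; combined with the equality $N = a + b$ from Remark~\ref{remark: property m-full}(4) (with $b = \mu(IB)$), the task reduces to proving $\mu(\initial_\m(IB)) \leq a + 2b - 2$ for the $\m_B$-primary ideal $IB$ in the one-dimensional ring $B$. I expect this is where the main obstacle lies: the $b$ leading forms of a minimal generating set of $IB$ already account for $b$ generators of $\initial_\m(IB)$, and the point is to show that the number of further generators arising from drops in $\m$-order is at most $a + b - 2$. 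This can be attacked either by showing that the $\m$-fullness of $I$ descends, in the generic setup, to an $\m_B$-fullness of $IB$, so that Corollary~\ref{cor: quadratic bound} applies once more in dimension one, or by directly exploiting the one-dimensional graded structure of $\gr_\m(B)$ through an iterated use of Lemma~\ref{lemma: bound generators}.
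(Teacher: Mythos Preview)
Your proposal has a genuine gap at exactly the point you flag: the bound $\mu(K) \leq 2N-2$ after the second associated graded passage is not established, and neither of your suggested attacks is carried out. The reduction to $\mu(\initial_{\m}(IB)) \leq a + 2b - 2$ is correct arithmetic, but $\m$-fullness of $I$ does not obviously descend to $\m_B$-fullness of $IB$ (Corollary~\ref{cor: quadratic bound} rests on the identity $\mu(I) = \length(\widetilde R/(z,I)\widetilde R) + \mu(I(\widetilde R/z\widetilde R))$ of Remark~\ref{remark: property m-full}(4), which has no analogue for $IB$ in the one-dimensional ring $B$), and passage to an initial ideal can in general increase the number of generators without an evident bound in the data $a,b$ alone. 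Without this bound the appeal to Theorem~\ref{theorem: Hanes} yields nothing.

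The paper avoids this difficulty by \emph{not} taking the second associated graded step at all. After reaching $J = \initial_z(IS) \subseteq (S/zS)[T]$ with at most $N$ homogeneous generators, the proof stays in $(S/zS)[T]$ and imitates Hanes' argument directly: to the $\leq N-1$ generators other than $T^K$ one adjoins at most $N-2$ further elements so that the list contains a minimal reduction of each distinct one-dimensional slice $J_{n_k} \subseteq S/zS$, and then for each integer $q$ defines $J^{[q]}$ as the ideal generated by the $q$th powers of these $\leq 2N-2$ elements. Because $\dim(S/zS)=1$ and the list contains a minimal reduction of every $J_i$, one computes $\lim_q \length((S/zS)[T]/J^{[q]})/q^2 = \sum_i \eh(J_i)$, applies the one-dimensional Lech inequality to each $J_i$, and finishes with the colength comparison from \cite{Hanes}. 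This is characteristic-free and never invokes Theorem~\ref{theorem: Hanes}; as Remark~\ref{rmk: different proof} explains, the whole point of this theorem is to provide a route to the uniform Lech inequality that avoids Theorem~\ref{theorem: Hanes} and its reduction-mod-$p$ input, so building the proof on Theorem~\ref{theorem: Hanes} would defeat its purpose even if your generator bound could be salvaged.
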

\begin{proof}
We use the notation of Corollary~\ref{cor: quadratic bound}.
Observe that passing from $R$ to $S$ does not affect multiplicity and does not increase the colength. Let $J:=\initial_z(IS)$ be the initial ideal of $IS$ in $\gr_z(S)\cong (S/zS)[T]$. We know that
\begin{equation}
\label{equation in two-dim Hanes}
\frac{\eh(I)}{2\cdot\length(R/I)}\leq \frac{\eh(IS)}{2\cdot\length(S/IS)}\leq \frac{\eh(J)}{2\cdot\length(\gr_z(S)/J)}.
\end{equation}

We next write $J = J_0 + J_1T + \cdots + J_{K - 1}T^{K-1} + T^K$ as a $T$-homogenous ideal of $(S/zS)[T]$ with $J_{K-1} \neq R$. By Corollary~\ref{cor: quadratic bound}, we know that $J$ can be generated by at most $N$ homogeneous elements. We define the sequence $\{n_k\}$ that labels the distinct $J_i$ by setting $n_0 = 0$ and
$n_{k+1} =  \min \{n < K \mid J_n \neq J_{n_k}\}$.
We now choose appropriately $\leq N - 1$ generators $\{a_{i, j}T^j\}$ of $J$, so that
$a_{1, 0}, a_{2, 0}, \ldots $ generate $J_0=J_{n_0}$ and
$J_{n_k}$ is generated by $a_{i, j}$ with $j \leq k$ (thus $J$ is generated by $T^k$ and $a_{i, j}T^{n_j}$).
By adjoining one more generator to each $J_{n_k}$ if necessary,
we may assume that the chosen generating set contain a minimal reduction
of each $J_{n_k}$ as one of $a_{i, k}$ (note that each $J_{n_k}$ is an ideal in the one-dimensional ring $S/zS$).
The total number of adjoined generators is at most $N - 2$, because there are at most $N - 1$ ideals $J_{n_k}$s and
we do not need to adjoin a new generator to $J_{n_0}$ (since we can let $a_{1,0}$ be a minimal reduction of $J_0$).

Inspired by positive characteristic methods, for each positive integer $q$ we define
$J^{[q]}$ as the ideal generated by $\{a_{i, j}^qT^{jq}\}$ and define $J_{i}^{[q]}$ accordingly. Note that $J^{[q]}$ and each $J_i^{[q]}$ in principle might depend on the chosen generating set $\{a_{i,j}T^j\}$ but this will not be a problem.
By definition, we have
$$J^{[q]}= J_0^{[q]} + J_0^{[q]} T + \cdots J_0^{[q]} T^{q - 1} + J_1^{[q]}T^q + \cdots+  J_{1}^{[q]}T^{2q-1} + J_2^{[q]}T^{2q}+ \cdots .$$ It follows that $\length ((S/zS)[T]/J^{[q]}) = q \sum_{i=0}^{K-1} \length (S/(z, J_i^{[q]}))$.
Note that since $\dim(S/zS) = 1$ and our selected list of generators
contains a minimal reduction $a_i$ of every $J_i$, we have that
\[\eh (J_i) =
\lim_{q \to \infty} \frac{\length (S/(z, a_i^q))}{q} \geq
\lim_{q \to \infty} \frac{\length (S/(z, J_i^{[q]}))}{q}
\geq \lim_{q \to \infty} \frac{\length (S/(z, J_i^{q}))}{q}
= \eh (J_i).
\]
Therefore we have
$$\lim_{q \to \infty}  \frac{\length((S/zS)[T]/J^{[q]})}{q^2}=\sum_{i=0}^{K-1}\frac{\length (S/(z, J_i^{[q]}))}{q} =\sum_{i=0}^{K-1}\eh(J_i).$$
Applying Lech's inequality (Theorem~\ref{theorem: Lech}) to each $J_i\subseteq S/zS$, we then have
\begin{equation}\label{frq equation}
\lim_{q \to \infty} \frac{\length ((S/zS)[T]/J^{[q]})}{q^2} \leq \eh (S/zS) \sum_{i=1}^{K-1} \length (S/(z, J_i)) = \eh (S/zS) \length ((S/zS)[T]/J).
\end{equation}

At this point, we follow the argument in \cite[Theorem 2.4]{Hanes}.
First, since $J^{[q]}$ is clearly contained in $J^q$ and is generated by at most $2N-2$ elements, for any integer $s$ we can surject $2(N-1)$ copies of $(S/zS)[T]/J^s$  onto $J^{[q]}/(J^{[q]} \cap J^{q + s})$. Thus
$$\length ((S/zS)[T]/J^{[q]}) \geq \length ((S/zS)[T]/J^{q + s}) - 2(N-1) \length ((S/zS)[T]/J^s).$$
As in \cite[Theorem 2.4]{Hanes}, setting $s = \lceil q/(2N-3) \rceil$ will yield
\[
\lim_{q \to \infty} \frac{\length ((S/zS)[T]/J^{[q]})}{q^2}
\geq  \frac{\eh(J)}{2} \left (\left(1 + \frac{1}{2N-3} \right)^2 - \frac{2N-2}{2N-3} \right )
= \frac{\eh(J)} 2 \left(1 + \frac{1}{2N-3} \right).
\]
Combining with (\ref{frq equation}) we obtain that
\[
\frac{\eh(J)}{2\cdot \length ((S/zS)[T]/J)} \leq \eh(S/zS) \left(1 + \frac{1}{2N-3} \right)^{-1} = \eh(R) \left(1 + \frac{1}{2N-3} \right)^{-1}.
\]
This together with (\ref{equation in two-dim Hanes}) completes the proof.
\end{proof}

\begin{corollary}\label{cor: no Hanes bound}
Let $(R, \m)$ be a two-dimensional Noetherian local ring.
Let $\m=(x_1,\dots,x_n)$ and define $R' = R(t_1,\dots,t_n)/(t_1x_1+\cdots+t_nx_n)$.
Then for any positive integer $N$ there exists $\varepsilon > 0$ such that
for any $\mf m$-primary ideal $I$ with $\length (R'/IR') \leq N$, we have
\[
\eh(I)\leq 2(1 - \varepsilon)\eh(R)\length(R/I).
\]
\end{corollary}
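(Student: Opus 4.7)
The plan is to reduce to the case where $I$ is $\m$-full by passing to the integral closure, derive a uniform upper bound on $\mu(I)$ depending only on $R$ and $N$, and then invoke Theorem~\ref{theorem: dim 2 generators}.

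For the first step, one has $\eh(I) = \eh(\overline{I})$, $\length(R/I) \geq \length(R/\overline{I})$, and $\overline{I}R' \supseteq IR'$ gives $\length(R'/\overline{I}R') \leq \length(R'/IR') \leq N$, so it suffices to prove the inequality for $\overline{I}$ in place of $I$. Since $\overline{I}$ is $\m$-primary (hence not $\sqrt{(0)}$) and integrally closed, Remark~\ref{remark: property m-full}(2) ensures that $\overline{I}$ is $\m$-full.

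Set $\widetilde{R} = R(t_1,\dots,t_n)$ and $z = t_1x_1 + \cdots + t_nx_n$, so $R' = \widetilde{R}/z\widetilde{R}$. Then $R'$ is one-dimensional with infinite residue field (since $z$ is a superficial parameter of $\widetilde{R}$), and $\eh(R') = \eh(R)$. Remark~\ref{remark: property m-full}(4) yields
\[
\mu(\overline{I}) = \length(R'/\overline{I}R') + \mu(\overline{I}R'),
\]
and the first summand is at most $N$ by hypothesis. To control the second, I would invoke the one-dimensional bound $\mu(J) \leq \eh(S) + \length(\lc^0_{\m S}(S))$ for any $\m S$-primary ideal $J$ in a one-dimensional local ring $S$ (this is \cite[Lemma 5.5]{HMQS}, and is the estimate used inside the proof of Lemma~\ref{lemma: bound generators}) with $S = R'$ and $J = \overline{I}R'$. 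This yields
\[
\mu(\overline{I}) \leq N + \eh(R) + \length(\lc^0_{\m R'}(R')) =: M,
\]
a quantity depending only on $R$ and $N$.

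Finally, Theorem~\ref{theorem: dim 2 generators} applied to the $\m$-full ideal $\overline{I}$ with $\mu(\overline{I}) \leq M$ gives
\[
\eh(\overline{I}) \leq 2\left(1 - \frac{1}{2M - 2}\right) \eh(R) \length(R/\overline{I}),
\]
and combining with the first-step comparison yields $\eh(I) \leq 2(1 - \varepsilon)\eh(R)\length(R/I)$ for $\varepsilon = \tfrac{1}{2M-2}$. No substantial obstacle arises: the key uniform bound on $\mu(\overline{I})$ is a direct consequence of the $\m$-full formula combined with the one-dimensional estimate already employed in Lemma~\ref{lemma: bound generators}, and the rest is a plug-in of Theorem~\ref{theorem: dim 2 generators}.
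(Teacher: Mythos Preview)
Your proof is correct and follows essentially the same approach as the paper's: both replace $I$ by its integral closure, use Remark~\ref{remark: property m-full}(4) together with \cite[Lemma~5.5]{HMQS} to bound $\mu(\overline{I})$ uniformly in terms of $N$ and invariants of $R'$, and then apply Theorem~\ref{theorem: dim 2 generators}. If anything, you are slightly more explicit (e.g., verifying $\length(R'/\overline{I}R')\leq N$ and writing out the constant $M$), but the argument is the same.
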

\begin{proof}
We may assume $I$ is integrally closed since replacing $I$ by its integral closure $\overline{I}$ will not affect $\eh(I)$ and will not increase $\length(R/I)$.
By Remark~\ref{remark: property m-full} and \cite[Lemma 5.5]{HMQS},
$\mu(I) \leq N + \mu(IR')$ is then bounded by a constant independent of $I$, so we may apply
Theorem~\ref{theorem: dim 2 generators}.
\end{proof}

\begin{remark}\label{rmk: different proof}
One can give an alternative proof of Theorem~\ref{theorem: main technical dim 2} (and thus
the uniform Lech's inequality Theorem~\ref{theorem: uniform Lech}) via Corollary~\ref{cor: no Hanes bound}: in fact, Corollary~\ref{cor: no Hanes bound} handles exactly the case $\length(S/IS)\leq C$ in the proof of Theorem~\ref{theorem: main technical dim 2}.
This alternative approach avoids the use of Theorem~\ref{theorem: Hanes} (i.e., \cite[Proposition~5.7]{HMQS}), which benefits in equal characteristic $0$
as it avoids the reduction mod $p$ argument needed to prove Theorem~\ref{theorem: Hanes}.
\end{remark}

Finally, we treat the higher dimensional case, the proof turns out to be easier, but, unlike Theorem~\ref{theorem: dim 2 generators}, the bound is not sharp for the maximal ideal in a regular local ring. We need a couple of lemmas. The first one is due to Mumford.

\begin{lemma}[{\cite[Proof of Lemma 3.6]{Mumford}}]
\label{lemma: Mumford}
Let $(R,\m)$ be a Noetherian local ring of dimension $d$ and let $I=I_0+I_1T+I_2T^2+\cdots+I_{N-1}T^{N-1}+T^N \subseteq R[T]$ be a $T$-homogeneous ideal of finite colength. Then we have
$$\frac{\eh(I)}{(d+1)!\length(R[T]/I)} \leq \frac{\sum_{i=0}^{N-1}\eh(I_i)}{d!\sum_{i=0}^{N-1}\length(R/I_i)}\leq \max_i\left\{\frac{\eh(I_i)}{d!\length(R/I_i)}\right\}.$$
\end{lemma}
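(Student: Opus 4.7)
The plan is to handle the two inequalities separately. The rightmost inequality is the standard mediant (weighted-average) bound: for positive reals $a_i, b_i$, one has $\sum a_i/\sum b_i \leq \max_i(a_i/b_i)$, applied to $a_i = \eh(I_i)$ and $b_i = d!\length(R/I_i)$. So the real work is the first inequality.

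Since $I$ is $T$-homogeneous with $I_k = R$ for $k \geq N$, we have $\length(R[T]/I) = \sum_{i=0}^{N-1}\length(R/I_i)$. Therefore the first inequality reduces to
\[
\eh(I) \leq (d+1)\sum_{i=0}^{N-1}\eh(I_i).
\]
I would prove this by directly estimating $\length(R[T]/I^n)$ through its $T$-graded pieces. The key observation is that for each $0 \leq i \leq N-1$ the containment $T^i I_i \subseteq I$ is immediate (an element of $I_i T^i$ is already $T$-homogeneous of degree $i$ with coefficient in $I_i$). Raising to the $n$-th power gives $T^{in} I_i^n \subseteq I^n$, and multiplying by $T^{k-in} \in R[T]$ for any $k \geq in$ shows that the $T$-degree-$k$ component satisfies $(I^n)_k \supseteq I_i^n$.

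Now I partition the range $0 \leq k < Nn$ into blocks $[in,(i+1)n)$ for $i = 0, 1, \ldots, N-1$. On the $i$-th block the inclusion above yields $\length(R/(I^n)_k) \leq \length(R/I_i^n)$, while for $k \geq Nn$ we have $(I^n)_k = R$ (because $T^N \in I$ forces $T^{Nn} \in I^n$). Summing gives
\[
\length(R[T]/I^n) = \sum_{k=0}^{Nn-1}\length(R/(I^n)_k) \leq n \sum_{i=0}^{N-1}\length(R/I_i^n).
\]
Dividing by $n^{d+1}$, multiplying by $(d+1)!$, and letting $n\to\infty$ converts the right-hand side to $(d+1)!\cdot\frac{1}{d!}\sum_i \eh(I_i) = (d+1)\sum_i \eh(I_i)$, which is exactly the desired bound on $\eh(I)$.

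I do not anticipate a serious obstacle: the argument hinges on the single observation $T^i I_i \subseteq I$, which faithfully packages the $T$-homogeneous structure of $I$ into an inequality relating $(I^n)_k$ to the powers $I_i^n$; the rest is a bookkeeping estimate on lengths that becomes sharp after passing to the limit. The factor $(d+1)$ arises naturally from the fact that, in the worst case, each $I_i^n$ controls a block of $n$ consecutive graded pieces, effectively adding one dimension to the Hilbert--Samuel asymptotics.
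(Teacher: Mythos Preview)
Your proof is correct. The mediant inequality handles the right-hand bound, and your graded estimate $\length(R[T]/I^n)\leq n\sum_i\length(R/I_i^n)$, obtained from the containments $I_i^n\subseteq (I^n)_k$ for $in\leq k<(i+1)n$, cleanly yields $\eh(I)\leq (d+1)\sum_i\eh(I_i)$ after passing to the limit; this is exactly the first inequality once one uses $\length(R[T]/I)=\sum_i\length(R/I_i)$.

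The paper does not supply its own proof of this lemma; it simply attributes the result to Mumford's proof of \cite[Lemma~3.6]{Mumford}. Your argument is essentially the natural one (and is in the spirit of Mumford's staircase estimate): one bounds the $T$-graded pieces of $R[T]/I^n$ by the coarsest available information, namely $I_i^n$ on the block $[in,(i+1)n)$. There is nothing to compare against in the paper itself, and your write-up would serve perfectly well as a self-contained proof of the lemma.
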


\begin{lemma}\label{lemma: Lech colength}
For any Noetherian local ring $(R, \m)$ of dimension $d \geq 2$ and any $\m$-primary ideal $I$ of colength at most $N$ we have
\[
\eh(I) \leq d! \left(1 - \frac{1}{d! N} \right ) \eh(R) \length (R/I).
\]
\end{lemma}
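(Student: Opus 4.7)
My plan is to reformulate the inequality as a defect bound and exploit Lech's strictness for $d \geq 2$. Setting $n = \length(R/I) \leq N$, the claim is equivalent to
\[
\delta(I) := d!\,\eh(R)\,n - \eh(I) \;\geq\; \frac{\eh(R)\,n}{N}.
\]
Since Lech's inequality is strict in dimension $d \geq 2$ (see \cite[p.~74]{LechMultiplicity}) and both $\eh(I)$ and $d!\,\eh(R)\,n$ are positive integers, we always have $\delta(I) \geq 1$.

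This immediately handles the subcase $\eh(R)\,n \leq N$ (which in particular covers all rings with $\eh(R) = 1$), since then $\eh(R)\,n/N \leq 1 \leq \delta(I)$. The remaining subcase is $\eh(R)\,n > N$, which forces $\eh(R) \geq 2$; here I would aim to prove the stronger uniform estimate $\delta(I) \geq \eh(R)$. Combined with $n \leq N$, this gives $\delta(I) \geq \eh(R) \geq \eh(R)\,n/N$, closing the argument.

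To establish $\delta(I) \geq \eh(R)$ when $d \geq 2$, I plan induction on $n$. The base $n = 1$ gives $I = \m$ and $\delta(\m) = (d!-1)\,\eh(R) \geq \eh(R)$ since $d \geq 2$. For the inductive step, I would pick a socle representative $\alpha \in (I:\m) \setminus I$ (which exists since $(I:\m)/I = \soc(R/I) \neq 0$), and set $J = I + \alpha R$, so that $\length(R/J) = n-1$, $\alpha\,\m \subseteq I$, and therefore $\m J \subseteq I \subseteq J$. The inductive hypothesis bounds $\eh(J)$, and one attempts to control $\eh(I)$ via this sandwich together with Lech applied to $\m J$.

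The main obstacle is the multiplicity jump $\eh(I) - \eh(J)$. The direct bound $\eh(I) \leq \eh(\m J) \leq d!\,\eh(R)(n-1 + \mu(J))$ coming from Lech on $\m J$ is too weak, because $\mu(J) \geq d$. A finer argument---for instance via mixed multiplicities of the pair $(\m, J)$, or a direct Hilbert-function comparison in the associated graded ring $\gr_\m R$ exploiting the containment $\initial_\m I \supseteq (\gr_\m R)_{\geq n}$ that follows from $\m^n \subseteq I$---will likely be needed to extract the clean $\eh(R)$ improvement required for the $\eh(R) \geq 2$ case.
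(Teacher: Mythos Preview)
Your proposal has a genuine gap. The case $\eh(R)\,n \leq N$ (in particular $\eh(R)=1$) is handled correctly: strict Lech plus integrality gives $\delta(I)\geq 1$, which is exactly what is needed there, and this is in fact the same arithmetic the paper uses. But for $\eh(R)\geq 2$ you do not have a proof. You explicitly acknowledge that the socle-element induction is ``too weak'' and that ``a finer argument \ldots\ will likely be needed''; neither of your two suggested repairs (mixed multiplicities of $(\m,J)$, or a Hilbert-function comparison in $\gr_\m R$) is carried out, and there is no evident way to make the induction close: the bound $\eh(I)\leq \eh(\m J)\leq d!\,\eh(R)\bigl(n-1+\mu(J)\bigr)$ loses by a factor of $\mu(J)-1\geq d-1$ at every step, which swamps the target gain of $\eh(R)$.

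The paper avoids this difficulty altogether by a different route: it invokes the Noether-normalization proof of Lech's inequality from \cite{HSV} to reduce the entire statement to the case of an equicharacteristic \emph{regular} local ring, where $\eh(R)=1$. After that reduction, one is precisely in your Case~1, and the one-line argument (strict Lech, both sides integers, hence $\eh(I)\leq d!\,\length(R/I)-1$, so $\eh(I)/\bigl(d!\,\length(R/I)\bigr)\leq 1-1/(d!\,\length(R/I))\leq 1-1/(d!N)$) finishes the job. The missing ingredient in your approach is exactly this reduction to $\eh(R)=1$; without it, or some substitute that controls the $\eh(R)\geq 2$ case directly, the argument is incomplete.
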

\begin{proof}
The proof of Lech's inequality via Noether's normalization given in \cite{HSV}
shows that it is enough to prove the statement in an equicharacteristic regular local ring of dimension $d$. Since it was shown by Lech \cite[Page 74, after (4.1)]{LechMultiplicity} that in dimension at least two, we always have strict inequality in Theorem~\ref{theorem: Lech} (so that each rational number appeared on the left hand side below must be strictly less than $1$), it follows that
\[
\max_{\length(R/I)\leq N}
\left \{\frac{\eh(I)}{d! \length (R/I)}\right\} \leq 1 - \frac{1}{d! N}.
\qedhere \]
\end{proof}

\begin{theorem}\label{theorem: dim d generators}
Let $d > 2$ and $N \geq d$ be two positive integers.
Then there exists a constant $c = c(N, d) \in (0, 1)$ such that for any Noetherian local ring $(R, \m)$ of dimension $d$ and any $\m$-primary $\m$-full ideal $I$
which can be generated by $N$ elements we have
\[
\eh(I) \leq d! c \eh(R) \length (R/I).
\]
\end{theorem}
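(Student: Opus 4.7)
The plan is to follow the strategy of Theorem~\ref{theorem: dim 2 generators} by passing to the associated graded ring with respect to a generic linear form $z$, and then analyzing the $T$-homogeneous initial ideal. The key simplification in dimension $d>2$ is that the slices of this initial ideal live in a ring of dimension $d-1\geq 2$, which lets us apply Lemma~\ref{lemma: Lech colength} directly and bypass the Frobenius-style $J^{[q]}$ argument needed in the two-dimensional case.

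First, I would set up as in Corollary~\ref{cor: quadratic bound}: let $\m=(x_1,\ldots,x_n)$, put $\widetilde{R}=R(t_1,\ldots,t_n)$, $z=t_1x_1+\cdots+t_nx_n$, and $S=\widetilde{R}/\lc^0_{\m}(\widetilde{R})$. Then $z$ is a nonzerodivisor on $S$ that is part of a minimal reduction of $\m S$, so $\dim(S/zS)=d-1\geq 2$ and $\eh(R)=\eh(S)=\eh(S/zS)$. Because $\eh(I)\leq \eh(IS)$ and $\length(R/I)\geq \length(S/IS)$, it suffices to bound the ratio for $IS$. Passing to $\gr_z(S)\cong(S/zS)[T]$ and the $T$-homogeneous ideal $J=\initial_z(IS)=J_0+J_1T+\cdots+J_{K-1}T^{K-1}+T^K$ (with $J_0\subseteq J_1\subseteq\cdots$), the inequalities $\eh(IS)\leq \eh(J)$ and $\length(S/IS)=\length((S/zS)[T]/J)$ further reduce the task to bounding the corresponding ratio for $J$.

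The crux is a uniform colength bound on every slice $J_i$. Since $J_0=(IS+zS)/zS$, we have $\length((S/zS)/J_0)\leq \length(\widetilde{R}/(I,z)\widetilde{R})$, and because $I$ is $\m$-full with $\mu(I)\leq N$, Remark~\ref{remark: property m-full}(4) gives $\length(\widetilde{R}/(I,z)\widetilde{R})\leq \mu(I)\leq N$. As the chain $J_0\subseteq J_i$ is ascending, $\length((S/zS)/J_i)\leq N$ for every $i<K$. Applying Lemma~\ref{lemma: Lech colength} in $S/zS$ (of dimension $d-1\geq 2$) yields
$$
\frac{\eh(J_i)}{(d-1)!\length((S/zS)/J_i)}\leq \left(1-\frac{1}{(d-1)!\,N}\right)\eh(R).
$$

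To finish, I would apply Lemma~\ref{lemma: Mumford} to $J$ inside $(S/zS)[T]$ (which has dimension $d$), obtaining
$$
\frac{\eh(J)}{d!\length((S/zS)[T]/J)}\leq \max_{i<K}\frac{\eh(J_i)}{(d-1)!\length((S/zS)/J_i)}\leq \left(1-\frac{1}{(d-1)!\,N}\right)\eh(R),
$$
so the theorem holds with $c(N,d)=1-1/((d-1)!\,N)$. I do not expect a serious obstacle: the only substantive input beyond the material already in the excerpt is the colength bound on $J_0$ coming from $\m$-fullness, and everything else is routine bookkeeping. This also explains the paper's remark that the bound here is not sharp: the resulting $c$ sits close to $1$, whereas the sharp value for $I=\m$ in a regular local ring is $1/d!$.
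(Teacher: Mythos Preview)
Your proof is correct and follows essentially the same route as the paper: pass to $S=\widetilde{R}/\lc_{\m}^0(\widetilde{R})$ and then to $\gr_z(S)\cong(S/zS)[T]$, use $\m$-fullness via Remark~\ref{remark: property m-full}(4) to bound $\length((S/zS)/J_i)\leq N$ for every slice, apply Lemma~\ref{lemma: Lech colength} in $S/zS$, and finish with Lemma~\ref{lemma: Mumford}, arriving at the same explicit constant $c=1-1/((d-1)!\,N)$.
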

\begin{proof}
Let us write $\m=(x_1,\dots,x_n)$ and define
$\widetilde{R} = R(t_1,\dots,t_n)$
with the general linear form $z = t_1x_1+\cdots+t_nx_n$.
By Remark~\ref{remark: property m-full}, for any $\m$-primary $\m$-full ideal $I$ we have
\[
\mu(I(\widetilde{R} /z\widetilde{R})) + \length (\widetilde{R} /(I, z)\widetilde{R}) = \mu(I) \leq N.
\]
We apply the same reduction as in the first paragraph of the proof of Theorem~\ref{theorem: dim 2 generators}: we can pass from $R$ to $S := (R/\lc_{\m}^0 (R)) \otimes \widetilde{R}$ because this will not affect multiplicity and will not increase the colength, now $z$ is a nonzerodivisor on $S$ and we can further pass from $S$ to $\gr_z (S) \cong (S/zS)[T]$ and note that if $c$ works for all $T$-homogeneous ideals in $\gr_z (S)$, then it will work for $S$.

We now write $\initial_z(IS)$, the initial ideal of $IS$ in $\gr_z(S)\cong (S/zS)[T]$, as $I_0+I_1T+I_2T^2+\cdots+I_{K-1}T^{K-1}+T^K$. By Lemma~\ref{lemma: Mumford}, it is enough to show that there exists $c>0$ such that $\eh(I_i)\leq (d-1)!(1-c)\eh(S/zS)\length(S/(z,I_i))$. But now we have $\length(S/(z,I_i))\leq N$ for each $I_i$ and $\dim(S/zS)=d-1\geq 2$. Thus the assertion follows from Lemma~\ref{lemma: Lech colength} applied to $S/zS$ (and we can actually take $c=1 - \frac{1}{(d-1)!N}$).
\end{proof}

\begin{corollary}
\label{cor: weak Hanes}
Let $d \geq 2$ and $N \geq d$ be two positive integers. Then there exists a constant $c = c(N, d) \in (0, 1)$ such that for any Noetherian local ring $(R, \m)$ of dimension $d$ and any $\m$-primary integrally closed ideal $I$ which can be generated by $N$ elements we have
\[
\eh(I) \leq d! c \eh(R) \length (R/I).
\]
\end{corollary}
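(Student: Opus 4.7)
The plan is to reduce this corollary to the two preceding theorems by exploiting the link between integrally closed ideals and $\m$-full ideals provided in Remark~\ref{remark: property m-full}(2). That remark says that any integrally closed ideal $I$ is either $\m$-full or equals $\sqrt{(0)}$. Since $I$ is $\m$-primary and $\dim R = d \geq 2$, the second alternative is impossible: if $\sqrt{(0)}$ were $\m$-primary we would have $\dim R = 0$, contradicting $d \geq 2$. Therefore $I$ must be $\m$-full.

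With $\m$-fullness in hand, I would split on the dimension. For $d = 2$, I would invoke Theorem~\ref{theorem: dim 2 generators}, which gives the explicit bound
\[
\eh(I) \leq 2 \left(1 - \frac{1}{2N - 2}\right) \eh(R) \length(R/I),
\]
so that $c(N, 2) := 1 - \tfrac{1}{2N - 2}$ works. For $d \geq 3$, I would apply Theorem~\ref{theorem: dim d generators}, which produces a constant $c(N, d) \in (0,1)$ valid for all $\m$-full $\m$-primary ideals generated by $N$ elements. Setting $c(N, d)$ to be the constant appropriate to the given dimension then proves the corollary.

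There is no substantive obstacle here, since all the work has been done in Theorem~\ref{theorem: dim 2 generators} and Theorem~\ref{theorem: dim d generators}; the statement is purely a packaging of these two results. The only conceptual step is the passage from integrally closed to $\m$-full via Remark~\ref{remark: property m-full}(2), together with the trivial observation that the $\sqrt{(0)}$ exception is incompatible with $I$ being $\m$-primary in positive dimension.
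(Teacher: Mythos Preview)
Your proposal is correct and matches the paper's own proof essentially line for line: the paper also observes that in dimension $\geq 2$ any $\m$-primary integrally closed ideal is $\m$-full via Remark~\ref{remark: property m-full}, and then invokes Theorem~\ref{theorem: dim 2 generators} for $d=2$ and Theorem~\ref{theorem: dim d generators} for $d>2$.
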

\begin{proof}
Since we are in dimension at least two, any $\m$-primary integrally closed ideal is $\m$-full, see Remark~\ref{remark: property m-full}. The conclusion follows from Theorem~\ref{theorem: dim 2 generators} when $d=2$ and Theorem~\ref{theorem: dim d generators} when $d>2$.
\end{proof}

As we mentioned in the introduction, we expect Corollary~\ref{cor: weak Hanes} holds without assuming $I$ is integrally closed (and recall that this is true in equal characteristic by Theorem~\ref{theorem: Hanes}).

\begin{conjecture}
\label{conj: Hanes}
Let $d \geq 2$ and $N \geq d$ be two positive integers. Then there exists a constant $c = c(N, d) \in (0, 1)$ such that for any Noetherian local ring $(R, \m)$ of dimension $d$ and any $\m$-primary ideal $I$ which can be generated by $N$ elements we have
\[
\eh(I) \leq d! c \eh(R) \length (R/I).
\]
\end{conjecture}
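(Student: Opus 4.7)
My plan is to try to remove the $\m$-full hypothesis from the proof of Theorem~\ref{theorem: dim d generators}. After reducing to the complete case with infinite residue field, I would follow that proof: pass to $S = (R/\lc_{\m}^0(R)) \otimes R(t_1,\dots,t_n)$ (so that a general linear form $z = \sum t_i x_i$ is a nonzerodivisor), and then to the associated graded ring $\gr_z(S) \cong (S/zS)[T]$. Writing the initial form $J := \initial_z(IS) = I_0 + I_1 T + \cdots + I_{K-1} T^{K-1} + T^K$ and applying Lemma~\ref{lemma: Mumford} reduces the task to establishing a uniform bound
\[
\eh(I_i) \leq (d-1)!\, c'(N, d)\, \eh(S/zS)\, \length (S/(z, I_i))
\]
for each component $I_i \subseteq S/zS$, in the $(d-1)$-dimensional equicharacteristic ring $S/zS$.

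In the $\m$-full case, one has $\mu(I(\widetilde R / z\widetilde R)) + \length(\widetilde R/(I, z)\widetilde R) = \mu(I) \leq N$ by Remark~\ref{remark: property m-full}, which forces $\length (S/(z, I_i)) \leq N$, and Lemma~\ref{lemma: Lech colength} then finishes the argument immediately. Without $\m$-fullness this colength bound is lost, and $\mu(I_i)$ is also not bounded in terms of $N$, so neither Lemma~\ref{lemma: Lech colength} nor a direct inductive application of the conjecture in dimension $d-1$ is available. A naive workaround would be to pass to the integral closure $\overline I$ and invoke Corollary~\ref{cor: weak Hanes}, but $\mu(\overline I)$ is not controlled by $\mu(I)$: already $I = (x^n, y^n) \subseteq k[[x, y]]$ satisfies $\overline I = \m^n$ with $n+1$ generators.

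The main obstacle is precisely this lack of control on the components $I_i$ arising in the graded reduction. To overcome it, I would attempt either a finer syzygy analysis to bound $\mu(I_i)$ (or directly the multiplicity/colength ratio of $I_i$) in terms of the $N$ defining relations of $I$, exploiting the fact that relations among the generators of $I$ produce relations among the leading coefficient ideals $I_i$ in $S/zS$; or one could look for a mixed-characteristic analog of the Frobenius-power argument underlying Theorem~\ref{theorem: Hanes} (itself derived from Hanes' work on Hilbert--Kunz multiplicity), perhaps using perfectoid techniques or the map $R \to R^+$ as a substitute for the absolute Frobenius. Either direction appears to require genuinely new input beyond the methods developed in this paper, which is consistent with the statement being posed as a conjecture rather than a theorem.
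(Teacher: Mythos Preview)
The statement you were asked to prove is labelled as a \emph{conjecture} in the paper, and indeed the paper offers no proof of it; it is presented as an open problem immediately after Corollary~\ref{cor: weak Hanes}. Your write-up is therefore not really a proof proposal but an honest assessment of why the methods of the paper do not settle the conjecture, and that assessment is accurate. You correctly isolate the point where the argument of Theorem~\ref{theorem: dim d generators} breaks down without the $\mf m$-full hypothesis: the identity $\mu(I)=\mu(I(\widetilde R/z\widetilde R))+\length(\widetilde R/(I,z)\widetilde R)$ from Remark~\ref{remark: property m-full} is what bounds the colengths $\length(S/(z,I_i))$ uniformly, and without it neither Lemma~\ref{lemma: Lech colength} nor an inductive appeal to lower dimension applies. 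Your example $I=(x^n,y^n)$ showing that passing to the integral closure does not preserve a bound on the number of generators is also to the point.

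In short, there is nothing to compare here: the paper leaves the statement open, and your analysis of the obstruction matches the paper's implicit reasoning for why Theorem~\ref{theorem: Hanes} (which relies on Frobenius/Hilbert--Kunz techniques and reduction mod $p$) has no known characteristic-free substitute.
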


\bibliographystyle{plain}
\bibliography{refs}

\end{document}